\theoremstyle{plain}
\newtheorem{theorem}{Theorem}[section]
\newtheorem{lemma}[theorem]{Lemma}
\theoremstyle{remark}
\newtheorem{remark}[theorem]{Remark}
\newtheorem{example}[theorem]{Example}
\theoremstyle{definition}
\newtheorem{definition}[theorem]{Definition}
\newtheorem{assumptionletter}{Assumption}
\newcommand{\N}{\mathbb{N}}
\newcommand{\Z}{\mathbb{Z}}
\newcommand{\bpire}{$(Z_n)_{n\geq0}$ }
\newcommand{\arp}{$(X_n)_{n\geq0}$ }
\DeclareMathOperator{\Var}{Var}
\begin{document}

\title[Recurrence/transience of critical BPIRE, application to ERW]{Recurrence and transience of critical branching processes in random environment with immigration and an application to excited random walks}

\author{Elisabeth Bauernschubert}
\address{Mathematisches Institut\\
    Eberhard Karls Universit\"at T\"ubingen\\
    Auf der Morgenstelle 10\\
    72076 T\"ubingen, Germany}
\email{elisabeth.bauernschubert@uni-tuebingen.de}

%\date{\today}
\date{January 16, 2013}
\subjclass[2000]{60J80 (60J85, 60K37)}
\keywords{Critical branching process in a random environment with immigration, excited random walk in a random environment, cookies of strength 1, recurrence, transience.}

\begin{abstract}
We establish recurrence and transience criteria for critical branching processes in random environment with immigration. These results are then applied to discuss recurrence and transience of a recurrent random walk in a random environment on $\Z$ that will be disturbed by cookies inducing a drift to the right of strength 1.
\end{abstract}

\maketitle

\section{Introduction}

This article complements \cite{Bauernschubert12}. In \cite{Bauernschubert12} the author considers a random walk in random environment on $\Z$ which is transient to the left and that will be disturbed by cookies of strength 1 to the right. As can be seen in \cite{Bauernschubert12}, the study of special kinds of branching processes is essential to obtain results on recurrence and transience of these excited random walks.

Therefore, let us first motivate the discussion of critical branching processes in random environment with immigration (critical BPIRE for short) within the present article by introducing the random walk we are dealing with.

\subsection{Excited random walk in random environment} 
Our model is explained as follows.
Consider a sequence $(p_x)_{x\in\Z}\in (0,1)^{\Z}$ and put a random number $M_x$ of cookies on every integer $x\in\Z$. Now a nearest neighbor random walk $(S_n)_{n\geq0}$ is started at 0 with the following transition probabilities.
If the random walker comes to site $x$ and if there is still at least one cookie on this site, he removes one cookie and jumps to $x+1$. Otherwise he makes a step to the right with probability $p_x$ and to the left with probability $1-p_x$.
For an illustration of this model see Figure~\ref{fig:model}, previously been presented in \cite{Bauernschubert12}.

\begin{figure}[!ht]
  \centering
  \scalebox{0.65}{\includegraphics[width=\columnwidth]{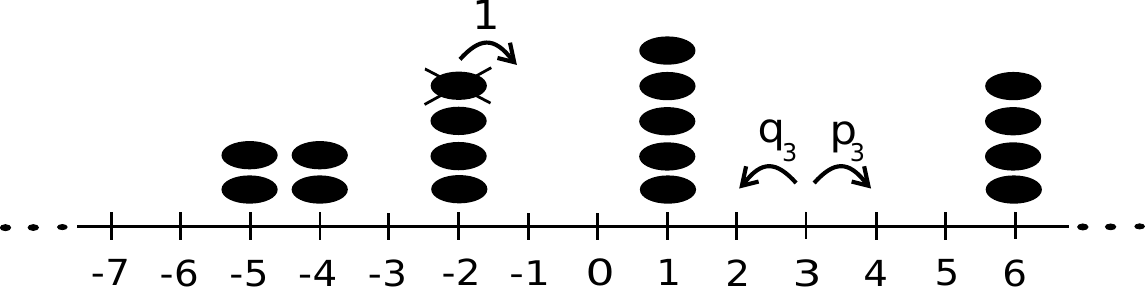}}
  \caption{Model of the random walk. (see \cite{Bauernschubert12})}
  {\parbox{0.85\textwidth}{\small If there are cookies at his current position $x\in\Z$, the random walker removes one and makes a step to $x+1$. If there is no cookie he jumps to the right with probability $p_x$ and to the left with probability $q_x:=1-p_x$.}}
  \label{fig:model}
\end{figure}

The cookies in our model have maximal strength and induce a drift to the right. On the other hand, we will assume a random environment $(p_x)_{x\in\Z}$ that makes a random walk in random environment (RWRE for short), i.e.\ a random walk where $M_x=0$ for all $x\in\Z$, be recurrent. So the question arises when the drift caused by the cookies succeeds in forcing the random walk to $+\infty$. In Theorem \ref{th:1}, criteria for transience and recurrence of the process are given.

First, let us introduce the notation for the model. Set $\Omega:=([0,1]^{\N})^{\Z}$. The elements from $\Omega$ are chosen at random according to a probability measure $\mathbb{P}$ on $\Omega$ with corresponding expectation operator $\mathbb{E}$. For fixed environment $\omega=((\omega(x,i))_{i\geq 1})_{x\in\Z}\in\Omega$ and $z\in\Z$ define a nearest-neighbor random walk $(S_n)_{n\geq0}$ on a suitable probability space $\Omega'$ with probability measure $P_{z,\omega}$, which satisfies
\begin{align*}
    P_{z,\omega}[S_0=z]&=1,\\
    P_{z,\omega}[S_{n+1}=S_n+1|(S_m)_{1\leq m\leq n}]&= \omega(S_n,\#\{m\leq n:\: S_m=S_n\}),\\
    P_{z,\omega}[S_{n+1}=S_n-1|(S_m)_{1\leq m\leq n}]&= 1-\omega(S_n,\#\{m\leq n:\: S_m=S_n\}).
\end{align*}
The value of $\omega(x,i)$ serves as the transition probability from $x$ to $x+1$ upon the $i$-th visit at site $x$. Furthermore, define $P_z[\cdot]:=\mathbb{E}[P_{z,\omega}[\cdot]]$ as the \emph{annealed} or \emph{averaged} probability measure with corresponding expectation operator $E_z$. The random walk $(S_n)_{n\geq0}$ is called \emph{recurrent} (\emph{transient}) if $S_n=0$ infinitely often ($\lim_{n\to\infty}S_n\in\{\pm \infty\}$) $P_0$-a.s.

With the convention $\sup\emptyset =0$, the number of cookies of strength 1 at site $x\in\Z$ is defined by
\begin{align*}
    &M_x:=\sup\{i\geq 1:  \; \omega(x,j)=1\;\text{ for all } 1\leq j\leq i\}
\end{align*}

In this article we postulate the following for the model.
\begin{assumptionletter}
         \label{as_A}
         \renewcommand{\theenumi}{A.\arabic{enumi}}
         \renewcommand{\labelenumi}{\theenumi}
      There is $\mathbb{P}$-a.s.\ $(p_x)_{x\in\Z}\in (0,1)^{\Z}$ such that:
         \begin{enumerate}
                 \item \label{as_A1} It holds $\mathbb{P}$-a.s.\ that
                 $\omega(x,i)=p_x$ for all $i> M_x$. Furthermore, $\mathbb{P}[p_x=\frac{1}{2}]<1$.
                 \item \label{as_A2} $(p_x,M_x)_{x\in\Z}$ is i.i.d.
		 \item \label{as_A3} $\mathbb{E}[|\log \rho_0|]<\infty$ and $\mathbb{E}[\log\rho_0]=0$ where $\rho_x:=({1-p_x})p_x^{-1}$ for $x\in\Z$.
		 \item \label{as_A4} $\mathbb{P}[M_0=\infty]=0$ and $\mathbb{P}[M_0=0]>0$.
         \end{enumerate}
\end{assumptionletter} 

If
$M_x=0$ $\mathbb{P}$-a.s.\ for all $x\in\Z$, assumptions \ref{as_A2} and \ref{as_A3} imply that the RWRE is recurrent, i.e.\ $-\infty=\liminf_{n\to\infty}S_n < \limsup_{n\to\infty}S_n= \infty$ ${P_0}$-a.s. For detailed results about RWRE see e.g.\ \cite{Solomon75}.

Under Assumption \ref{as_A}, $(S_n)_{n\geq0}$ can be seen as a recurrent random walk in random environment disturbed by cookies of strength 1 to the right.
In accordance to RWRE and excited random walk (ERW), our model is called \emph{excited random walk in random environment} (ERWRE for short).
In Section \ref{sec:ERWRE} we show the following recurrence and transience criteria for the ERWRE.

\begin{theorem}
    \label{th:1}
    Let Assumption \ref{as_A} hold and assume that $\mathbb{E}[|\log \rho_0|^{\delta}]<\infty$ for every $0<\delta<6$.
      \begin{itemize}
        \item[(i)] If $\mathbb{E}[(\log_+ M_0)^{2+\epsilon}]<\infty$ for some $\epsilon>0$, then   $S_n=0$ infinitely often $P_0$-a.s.
        \item[(ii)] If $\liminf_{t\to\infty}(t^{\lambda}\cdot \mathbb{P}[\log{M_0}>t])> 0$ for some $0<\lambda<2$, then $\lim_{n\to\infty}S_n=+\infty$ $P_0$-a.s. 
      \end{itemize}
\end{theorem}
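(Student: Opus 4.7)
My plan is to reduce Theorem \ref{th:1} to the recurrence/transience criteria for critical BPIRE established in the main body of this paper, extending the branching-process strategy used by the author in \cite{Bauernschubert12} for the transient RWRE case.

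The first step is to set up the standard Kesten--Kozlov--Spitzer style correspondence between the walk and a branching process. For each $n\ge 1$, consider the walk run until its first hit of $n$ and let $U_k^{(n)}$ be the number of down-crossings of the edge $\{k,k+1\}$ for $k=0,1,\dots,n-1$. Without cookies, $(U_{n-1-k}^{(n)})_{k\ge 0}$ is a branching process in random environment with geometric offspring distributions governed by the $p_k$; by Assumption \ref{as_A3} it is critical. Cookies of strength $1$ at site $x$ force the first $M_x$ visits to step to the right; on the branching-process side this is exactly an injection of $M_x$ additional individuals at generation $n-1-x$. Hence the process $(U_{n-1-k}^{(n)})_{k\ge 0}$ is a critical BPIRE whose environment is driven by $\rho_0$ and whose immigration law is driven by $M_0$.

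Next I would set up the dictionary between the walk and the BPIRE. The ERWRE is recurrent iff every site is visited infinitely often, which translates into the statement that the associated BPIRE (in a stationary/as $n\to\infty$ sense) returns to $0$ infinitely often, while $S_n\to+\infty$ corresponds to the BPIRE drifting away. To make this rigorous one needs to take $n\to\infty$ consistently, so that the finite-horizon branching processes are compatible, and to handle the excursions of the walk to the left, where the cookies are also present but do not prevent the walk from eventually crossing back into $[0,\infty)$ in the critical regime. Once this equivalence is in place, part (i) of Theorem \ref{th:1} follows by applying the BPIRE recurrence criterion with $\mathbb{E}[(\log_+ M_0)^{2+\epsilon}]<\infty$ as the immigration moment hypothesis, and part (ii) follows by applying the BPIRE transience criterion with the tail lower bound $\liminf_{t\to\infty} t^{\lambda}\,\mathbb{P}[\log M_0>t]>0$ for some $\lambda<2$. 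The condition $\mathbb{E}[|\log\rho_0|^{\delta}]<\infty$ for all $\delta<6$ is imposed precisely to meet the environmental moment hypotheses of those general BPIRE theorems.

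The easiest part of the proof is the invocation of the BPIRE theorems; the genuine work sits in the translation step, and I expect the main obstacle to be item (c) below. Specifically, one must (a) identify the immigration distribution precisely enough that its tail is genuinely controlled by $M_0$ rather than by some convolution with the environment between $k$ and $n$; (b) show that the BPIRE associated with different horizons $n$ can be coupled so that recurrence/transience passes to the infinite-volume object and back to statements about $P_0$-a.s.\ behaviour of $(S_n)$; and (c) deal with the asymmetry between the two half-lines. In the transient regime treated in \cite{Bauernschubert12} one could use regeneration at new maxima, but in the present critical setting the walk oscillates on both sides, so the left excursions must be absorbed into the analysis via a symmetrization and hitting-time decomposition rather than by a one-sided renewal argument.
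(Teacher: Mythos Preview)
Your overall strategy is the paper's: translate to a critical BPIRE with geometric offspring governed by the $p_k$ and immigration $M_k$, then invoke Theorems~\ref{th:recurrence_critBPIRE} and~\ref{th:transience_critBPIRE}. The execution differs in a way that removes precisely the obstacles you flag. Instead of finite-horizon down-crossings $U^{(n)}_k$ and a limit $n\to\infty$, the paper works directly with the first right excursion: a single BPIRE $(V_k)_{k\ge0}$ is built from the success/failure sequences at each site, so that ``$(V_k)$ hits $0$'' is literally ``the excursion from $1$ returns to $0$''. No coupling of horizons is needed, which dissolves your obstacle (b). Your obstacle (c) on the left half-line also evaporates via monotonicity in the cookie environment (Lemma~15 of \cite{Zerner05}, extended to $\omega\in([0,1]^{\N})^{\Z}$): since the underlying RWRE is recurrent, $\limsup_n S_n=+\infty$ $P_0$-a.s.\ with or without cookies, and together with $P_0[\liminf_n S_n\in\{\pm\infty\}]=1$ the only question is whether right excursions terminate. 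No symmetrization or separate analysis of left excursions is needed.

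There is one genuine omission in your sketch of~(ii). Transience of the BPIRE gives only $P_1[T_0=\infty]>0$, hence $P_0[\lim_n S_n=+\infty]>0$; it does not directly yield the $P_0$-a.s.\ conclusion. The paper closes this gap by importing the zero--one argument from the proof of Theorem~1.1(iii) in \cite{Bauernschubert12}. Without that step your dictionary establishes the dichotomy only on a set of positive, but a priori not full, measure.
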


\begin{remark} The tail assumption $\liminf_{t\to\infty}(t^{\lambda}\cdot \mathbb{P}[\log{M_0}>t])> 0$ implies that
          $\mathbb{E}[(\log_+ M_0)^{2-\epsilon}]=\infty$ for some $\epsilon>0$.
\end{remark}

\begin{remark}
  In the classical model of the ERW the underlying process is a simple symmetric random walk. Criteria for the recurrence and transience behavior of the classical ERW are given in Theorem 3.10 in \cite{KosyginaZerner12}. Hence, if Assumption \ref{as_A} holds with $\mathbb{P}[p_x=\frac{1}{2}]=1$ in \ref{as_A1}, the process $(S_n)_{n\geq0}$ ist recurrent if and only if $\mathbb{E}[M_0]\leq 1$. Note that this criterion is different to the one in Theorem \ref{th:1}.
\end{remark}

Theorem \ref{th:1} should be compared to the following result from \cite{Bauernschubert12}, where the underlying random walk in random environment is transient to the left and where the following recurrence and transience criteria for the ERWRE were obtained.
\begin{theorem}[\cite{Bauernschubert12}]
\label{th:transientERWRE}
Let assumptions \ref{as_A1}, \ref{as_A2} and \ref{as_A4}
hold and assume that $\{p_x, M_x, x\in\Z\}$ is independent under $\mathbb{P}$, $\mathbb{E}[|\log \rho_0|]<\infty$, $\mathbb{E}[\log\rho_0]>0$
and $\mathbb{E}[p_0^{-1}]<\infty$.
\begin{itemize}
 \item[(i)] If $\mathbb{E}[\log_+ M_0]<\infty$, then $\lim_{n\to\infty}S_n=-\infty$ $P_0$-a.s.
 \item[(ii)]  If $\mathbb{E}[\log_+ M_0]=\infty$ and if $\limsup_{t\to\infty}(t\cdot\mathbb{P}[\log{M_0}>t])<\mathbb{E}[\log \rho_0]$, then $S_n=0$ infinitely often $P_0$-a.s.
 \item[(iii)] If
$\liminf_{t\to\infty}(t\cdot\mathbb{P}[\log{M_0}>t])>\mathbb{E}[\log \rho_0]$, then $\lim_{n\to\infty}S_n=+\infty$ $P_0$-a.s. 
\end{itemize}
\end{theorem}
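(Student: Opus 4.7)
This result is established in \cite{Bauernschubert12}; I sketch the strategy used there, which also informs the arguments of the present paper.

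The proof exploits the classical correspondence between nearest-neighbor walks on $\Z$ and branching processes, in the spirit of Kesten--Kozlov--Spitzer, suitably adapted to handle the cookies. One constructs a branching process in random environment with immigration $(Z_k)_{k\geq 0}$ whose generations track the crossings of the walker across successive edges during excursions. The offspring distribution at generation $k$ is a geometric law whose parameter depends on the transition probability at the corresponding site, while the immigration at generation $k$ is a functional of the cookie count $M_{-k}$, since each cookie consumed contributes a deterministic forced step beyond the random branching mechanism. Assumptions \ref{as_A2}, \ref{as_A4} together with the moment bound $\mathbb{E}[p_0^{-1}]<\infty$ ensure the i.i.d.\ structure and the integrability required to pass between $(Z_k)$ and $(S_n)$.

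A dictionary then translates the three behaviors of the walker into three regimes of the BPIRE, governed by the interplay between the branching part, whose logarithmic drift is proportional to $\mathbb{E}[\log\rho_0]$, and the immigration tail $\mathbb{P}[\log M_0>t]$. The relevant sums take the form $\sum_k e^{-T_k}\log_+ M_{-k}$, where $T_k:=\sum_{j=1}^{k}\log\rho_{-j}$ is a random walk with positive drift $\mathbb{E}[\log\rho_0]$. In regime (i) the finite expectation $\mathbb{E}[\log_+ M_0]<\infty$ forces this sum to converge almost surely; in regime (iii) the hypothesis $\liminf_{t\to\infty} t\cdot\mathbb{P}[\log M_0>t]>\mathbb{E}[\log\rho_0]$ forces divergence; and (ii) is the critical balance producing recurrence.

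The main technical obstacle is the sharp tail analysis of $(Z_k)$ in the critical regimes (ii) and (iii), where the threshold constant $\mathbb{E}[\log\rho_0]$ emerges as the linear drift of $(T_k)$. One must show that the precise balance between the exponentially shrinking weights $e^{-T_k}$ and the polynomially fat immigration tails $\mathbb{P}[\log M_0>t]\asymp c/t$ is captured exactly by the threshold $c=\mathbb{E}[\log\rho_0]$; this combines fluctuation estimates for $(T_k)$ with Borel--Cantelli arguments adapted to the BPIRE structure. Transferring the BPIRE dichotomy back to statements about $(S_n)$ additionally requires control of the excursion lengths of the walker, which is the role of the moment assumption $\mathbb{E}[p_0^{-1}]<\infty$.
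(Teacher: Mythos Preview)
This theorem is not proved in the present paper; it is quoted from \cite{Bauernschubert12} solely for comparison with Theorem~\ref{th:1}, and no argument for it appears anywhere in the text. There is therefore no proof here to compare your proposal against.

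Your sketch is broadly in the right spirit---reduction to a (here subcritical) BPIRE via the crossings correspondence, with the three regimes arising from the competition between the subcritical drift $-\mathbb{E}[\log\rho_0]$ of the branching part and the immigration tail---but several details are garbled. The BPIRE in this setup tracks up-crossings during \emph{right} excursions (see Section~\ref{sec:ERWRE}), so the immigration in generation $k$ is $M_k$ at the positive site $k$, not $M_{-k}$; the relevant random walk is $Y_k=\sum_{j\le k}\log\mu_j$ with $\mu_j=\rho_j^{-1}$. The governing object is not a weighted sum of the form $\sum_k e^{-T_k}\log_+ M_{-k}$ but the autoregressive process $X_n=\mu_n X_{n-1}+M_n$ (equivalently the partial sums $\sum_{k\le n}\mu_1\cdots\mu_{k-1}M_k$), whose explosion or boundedness is decided by whether $\log M_k$ exceeds $-Y_{k-1}\approx k\,\mathbb{E}[\log\rho_0]$ infinitely often; this Borel--Cantelli comparison is where the threshold constant $\mathbb{E}[\log\rho_0]$ enters. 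Finally, $\mathbb{E}[p_0^{-1}]<\infty$ is not an excursion-length control: since $\Var_\varphi(\xi_1^{(1)})\mu_1^{-2}=p_1^{-1}$ for the geometric offspring law, it is exactly the offspring-variance moment needed to couple the BPIRE to the autoregressive process in the transience argument (compare assumption~(3) of Theorem~\ref{th:transience_critBPIRE} in the present paper).
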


Excited random walks go back to Benjamini and Wilson \cite{Benjamini} and have been further studied and extended among others by Zerner in \cite{Zerner05, Zerner06}, by Basdevant and Singh in \cite{Basdevant08b, Basdevant08a} and by Kosygina and Zerner in \cite{Zerner08}. A survey on ERW is given by Kosygina and Zerner in \cite{KosyginaZerner12}.
The novelty in our model are the random transition probabilities on sites without cookies and the unbounded number of cookies per site. However, we consider only cookies of maximal strength.

A useful technique to obtain results for the one dimensional ERW is to employ the well-known relationship between branching processes and random walks. See also \cite{Basdevant08b, Basdevant08a, Zerner08, Bauernschubert12} for this method.
Since there are only cookies of strength 1, we can concentrate on branching processes with immigration and no emigration.
In order to prove Theorem \ref{th:1}, we have to deal with a critical BPIRE. See Section \ref{sec:ERWRE} for the precise connection between our model and the critical BPIRE.
Roughly speaking, an excursion to the right of the random walk can be translated into a branching process by counting the number of up-crossing from $n$ to $n+1$, $n\in\N$, between down-crossings from $n$ to $n-1$.
The translation from the branching process to the excursion is given by the contour process.
The cookies in the ERWRE model correspond to the immigrants and the random environment gives the random offspring distributions for the branching process.
As we will see in Section \ref{sec:ERWRE}, the recurrence of the branching process implies the recurrence of the random walk and vice versa.

Thus, the discussion of BPIRE with focus on its recurrence and transience behavior is essential.

\subsection{Branching process in random environment with immigration}
The literature on branching processes is extensive, see for instance the survey article \cite{Vatutin93}. \cite{Vatutin11} contains a more recent review on branching processes in random environment. Critical branching processes in random environment with immigration are studied e.g.\ by Key in \cite{Key87} and Roitershtein in \cite{Roitershtein07}. Unfortunately, a proper transience and recurrence criteria for our model could not be found or deduced.

Let us introduce the definition of the BPIRE that we study in this article. It differs slightly from the one in \cite[p.\ 344f]{Key87}, see also Remark \ref{remark:Key_def}.

\begin{definition}
	\label{def:BPIRE}
      Consider a sequence $e=(e_n)_{n\in\N}=(r_n,m_n)_{n\in\N}$ of pairs of random variables which take values in the set of probability distributions on $\N_0$. For $n\in\N$, $r_n$ and $m_n$ give the distribution for reproduction, respectively immigration, in generation $n$. Assume that the so-called \emph{random environment} $(e_n)_{n\in\N}$ is i.i.d.\ under some probability measure $Q$ and denote by $Q_e[\cdot]:=Q[\cdot|e]$ the conditional distribution and by $E_e$ the expectation w.r.t.\ $Q_e$.\\
      Furthermore, let $\{\xi_j^{(n)}, M_k;\; j,n,k\in\N\}$ be a family of $\N_0$-valued random variables on the same probability space which is $Q$-a.s.\ independent under $Q_e$ and satisfies $Q$-a.s.\ for $j,n\in\N$
        \begin{align*}
		Q_e[M_n=\cdot]			&=m_n, 		\\%&\text{for } k\in\N_0,
		Q_e[\xi_j^{(n)}=\cdot]	&=r_{n}.
	\end{align*}
      Then, the process \bpire given by $Z_0:=0$ and
      \[
       Z_n:=\xi_1^{(n)}+\ldots+\xi_{Z_{n-1}}^{(n)}+M_n \quad\text{for } n\in\N
      \]
      (or every process with the same distribution) is called \emph{branching process in random environment with immigration} (BPIRE). The random variable $\xi_j^{(n)}$ can be understood as the number of offspring of the $j$-th individual of generation $n-1$ and $M_n$ as the number of immigrants in the $n$-th generation.

      Another useful way to describe the BPIRE is the following.
      For each
      $j\in\N$, let
      $(Z_n(j))_{n\in\N_0}$ be a branching process that starts at time $j$ with $Z_0(j)=M_j$ individuals (or immigrants) and whose reproduction distribution is given by $(r_{n+j})_{n\in\N}$ under $Q_e$.
      More precisely, we consider branching processes that have the same distribution like processes realized by $Z_n(j)=\xi_{j,1}^{(n+j)}+\ldots+\xi_{j,Z_{n-1}(j)}^{(n+j)}$ where, under $Q_e$, $\{\xi_{j,i}^{(k)}, M_n;\; j,i,k,n\in\N\}$ is independent and $\xi_{j,i}^{(k)}$ has distribution $r_{k}$ $Q$-a.s.\\
      Then, the sum over the offspring at the
      same time plus the immigrants at that time,
        \[
         Z_n=\sum_{j=1}^{n}{Z_{n-j}(j)} \quad\text{for } n\in\N,
        \]
      gives a BPIRE.

      The latter definition is similar to Key's definition in \cite[p.\ 344f]{Key87}, see also Remark \ref{remark:Key_def}.

      If $E_Q[\log E_e[\xi_1^{(1)}]]$ exists, \bpire is called \emph{critical, subcritical} or \emph{supercritical} if $E_Q[\log E_e[\xi_1^{(1)}]]=0,<0$ or $>0$ respectively in accordance with the standard classification of branching processes in random environment. For an extended classification see for instance also \cite[p.\ 4]{Vatutin11}.

\end{definition}

Throughout the article, the distribution $r_n$ will be represented by its probability generating function (p.g.f.) denoted by $\varphi_n$. Apart from Lemma \ref{th:Key} and its application in the proof of Theorem \ref{th:recurrence_critBPIRE} it will be furthermore assumed that $m_n$ takes $Q$-a.s.\ values in the set of dirac-measures on $\N_0$, $\{\delta_n, n\in\N_0\}$. In this case let us write $n$ instead of $\delta_{n}$ as a short notation. Thus, for a sequence $(M_n)_{n\in\N}$ of $\N_0$-valued random variables, $(\varphi,M):=(\varphi_n,M_n)_{n\in\N}$ denotes an environment where the distribution for offspring in generation $n$ of an individual in generation $n-1$ is given by the p.g.f.\ $\varphi_n$ and where $M_n$ individuals immigrate in the $n$-th generation $Q_{(\varphi,M)}$-a.s.

\begin{remark}
\label{remark:Key_def}
	In his formulation of the BPIRE-model in \cite{Key87}, Key
	does not count the number of immigrants at time $n$ as a part of generation $n$ but only their offspring as part of the next generation.
\end{remark}

Note that $(Z_n)_{n\geq0}$ is a time-homogeneous Markov chain under $Q$. In this article it is assumed that the BPIRE is irreducible under $Q$ with state space $\N$ or $\N_0$. Motivated by the application to the ERWRE, we are interested in recurrence and transience criteria for a critical BPIRE.

\begin{theorem}
	\label{th:recurrence_critBPIRE}
	Let $(Z_n)_{n\geq0}$ be an irreducible BPIRE with p.g.f.\ $\varphi_n$ for offspring in generation $n$ and $M_n$ immigrants in generation
	$n$. Assume that
	\begin{enumerate}
		\item $(\varphi_n, M_n)_{n\in\N}$ is i.i.d.\ under $Q$.
		\item $E_Q[|\log \mu_1|^2]<\infty$, $E_Q[\log \mu_1]=0$ and $Q[\mu_1= 1]<1$ where $\mu_n:=\varphi_n'(1)$.
		\item $E_Q[(\log_+ M_1)^{2+\epsilon}]<\infty$ for some $\epsilon >0$.
	\end{enumerate}
	Then $(Z_n)_{n\geq 0}$ is recurrent.
\end{theorem}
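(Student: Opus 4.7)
The plan is to study $Y_n := \log(1+Z_n)$ as an irreducible Markov chain on a discrete subset of $[0,\infty)$ and invoke a Lamperti-type recurrence criterion, showing that $(Y_n)$ returns to compact neighborhoods of the origin infinitely often. Recurrence of $(Z_n)$ to any fixed state will then follow from the assumed irreducibility under $Q$.

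The first step is a one-step conditional moment analysis of $Y_{n+1} - Y_n$ in the regime $Z_n = z \to \infty$. Given $Z_n = z$ and $(\mu_{n+1}, M_{n+1}) = (\mu, M)$, the variable $Z_{n+1}$ has conditional mean $z\mu + M$ and conditional variance of order $z$. A second-order Taylor expansion of $\log(1+Z_{n+1}) - \log(1+z)$, followed by averaging over the environment, should yield conditional drift of order $1/z$ and conditional second moment converging to $\Var_Q(\log\mu_1) > 0$ as $z \to \infty$, the positivity coming from $E_Q[\log\mu_1]=0$ together with $Q[\mu_1=1]<1$. Translated to the $y = \log(1+z)$ scale this gives drift $o(1/y)$ and positive asymptotic step variance --- exactly the Lamperti recurrence regime.

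To make this criterion work in the presence of unbounded upward jumps, the second step is to verify a uniform $(2+\epsilon)$-moment bound on $|Y_{n+1} - Y_n|$. Downward excursions are bounded by $Y_n$; upward excursions decompose into a reproduction contribution, tamed by $E_Q[|\log\mu_1|^2]<\infty$, and an immigration contribution of order $\log_+ M_{n+1}$, precisely where the hypothesis $E_Q[(\log_+ M_1)^{2+\epsilon}]<\infty$ is used --- the surplus $\epsilon$ giving the slack for a Borel--Cantelli truncation inside the recurrence criterion. With these estimates in hand, a Menshikov-style Lamperti theorem for Markov chains on the half-line with unbounded jumps delivers recurrence of $(Y_n)$, hence of $(Z_n)$. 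The main obstacle is the rigorous drift estimate at large $z$: one must simultaneously control the $O(\sqrt{z})$ fluctuation of $Z_{n+1}$ around its mean and the possibility that $\mu_{n+1}$ is very close to $0$, both of which are accommodated by the second-moment hypothesis on $\log\mu_1$.
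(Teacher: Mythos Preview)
Your approach is genuinely different from the paper's, which does not analyse $\log(1+Z_n)$ at all. Instead, the paper samples $(Z_n)$ along the strict descending ladder epochs $L_n$ of the random walk $\sum_{k\le n}\log\mu_k$, shows that $(Z_{L_n})_{n\ge0}$ is itself a \emph{subcritical} BPIRE, and then applies Key's positive-recurrence result (Lemma~\ref{th:Key}). The hypothesis $E_Q[(\log_+M_1)^{2+\epsilon}]<\infty$ enters only to verify $E_Q[\log_+E_{(\varphi,M)}[\tilde M_1]]<\infty$ for the induced immigration $\tilde M_1$, via the tail estimate $Q[L>n]\sim c/\sqrt{n}$ coming from $E_Q[(\log\mu_1)^2]<\infty$.

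As stated, your Lamperti strategy has two gaps that cannot be closed under the hypotheses of the theorem.

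First, you assert that given $Z_n=z$ the conditional variance of $Z_{n+1}$ is of order $z$. This presupposes $\Var_{\varphi}(\xi_1^{(1)})<\infty$ $Q$-a.s., which is nowhere assumed: the only hypotheses on the offspring law are moment conditions on $\log\mu_1$. Without finite offspring variance the second-order Taylor expansion of $\log(1+Z_{n+1})$ is unjustified, and the drift computation breaks down. Note, for contrast, that the paper's transience criterion (Theorem~\ref{th:transience_critBPIRE}) \emph{does} carry an explicit assumption on $\Var_\varphi(\xi_1^{(1)})\mu_1^{-2}$ precisely because that proof needs such control; the recurrence theorem deliberately avoids it.

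Second, your claimed uniform $(2+\epsilon)$-moment bound on $|Y_{n+1}-Y_n|$ fails under $E_Q[|\log\mu_1|^2]<\infty$ alone. For large $z$ the increment $\log(1+Z_{n+1})-\log(1+z)$ is, to leading order and conditionally on the environment, $\log\mu_{n+1}$; hence $E\big[|Y_{n+1}-Y_n|^{2+\epsilon}\,\big|\,Y_n=y\big]$ is, up to lower-order terms, $E_Q[|\log\mu_1|^{2+\epsilon}]$, which may well be infinite. The Menshikov-type Lamperti criteria you invoke require a uniform moment strictly above~$2$ on the \emph{entire} increment, not just on the immigration contribution; having the extra $\epsilon$ only for $\log_+M_1$ does not suffice. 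The ladder-epoch reduction in the paper sidesteps both issues because it uses only first-moment information $(\mu_j)$ about the branching, with the second moment of $\log\mu_1$ entering solely through the tail of $L$.
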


The next theorem gives a criterion for transience of a critical BPIRE. Let $Q_{\varphi}$ denote the conditional distribution $Q[\cdot|\varphi]$ and write $\Var_{\varphi}$ for the variance according to the measure $Q_{\varphi}$.

\begin{theorem}
  \label{th:transience_critBPIRE}
  Consider an irreducible BPIRE $(Z_n)_{n\geq0}$ with p.g.f.\ $\varphi_n$ for offspring in generation $n$ and $M_n$ immigrants in generation
  $n$. Assume that 
    \begin{enumerate}
      \item $(\varphi_n, M_n)_{n\in\N}$ is i.i.d.\ under $Q$.
      \item $E_Q[|\log   \mu_1|^{\delta}]<\infty$ for every $0<\delta<6$ and $E_Q[\log \mu_1]=0$.
      \item $E_Q[\big(\log_+(\Var_{\varphi}(\xi_1^{(1)})\cdot\mu_1^{-2})\big)^2]<\infty$ where $\mu_n:=\varphi_n'(1)$.
      \item $\liminf_{t\to\infty}(t^{\lambda}\cdot Q[\log{M_1}>t])> 0$ for some $0<\lambda<2$.
    \end{enumerate}
  Then $(Z_n)_{n\geq 0}$ is transient.
\end{theorem}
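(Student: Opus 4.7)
The plan is to exploit the heavy-tailed immigration: very large batches $M_j$ occur often enough that the sub-populations they spawn survive long enough to ``chain together'' and keep $Z_n\geq 1$ for all large $n$. Since $(Z_n)$ is irreducible, once I show $Q[Z_n=0\text{ i.o.}]=0$, the state $0$ is visited only finitely often, the same then holds for every state, and the chain is transient. I will work throughout with the decomposition from Definition~\ref{def:BPIRE}, $Z_n=\sum_{j=1}^n Z_{n-j}(j)$, and with the associated critical random walk $S_n:=\sum_{k=1}^n\log\mu_k$, which has mean zero by~(2); the $\delta<6$ moment bound in~(2) will give sharp LIL-type control of $(S_n)$ and its running minima on polynomial time scales, and~(3) will play the analogous role for $\eta_m:=\Var_\varphi(\xi_1^{(m)})\mu_m^{-2}$.

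First I would choose $\beta\in(1/2,1/\lambda)$, which is possible because $\lambda<2$, so that $\lambda\beta<1<2\beta$. Call $j$ a spike time if $\log M_j>j^\beta$; assumption~(4) yields $Q[j\text{ is a spike}]\geq c\,j^{-\lambda\beta}$ for all large $j$. Fix also $\alpha$ with $\lambda\beta<\alpha<2\beta$. Using independence of $(M_j)_{j\in\N}$, the events $B_n:=\{\text{no spike in }[n-n^\alpha,n]\}$ satisfy
\[
Q[B_n]\ \leq\ (1-c\,n^{-\lambda\beta})^{n^\alpha}\ \leq\ \exp\!\bigl(-c\,n^{\alpha-\lambda\beta}\bigr),
\]
which is summable in $n$. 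By the first Borel--Cantelli lemma, $Q$-a.s.\ every sufficiently large $n$ admits a spike time $j=j(n)\in[n-n^\alpha,n]$.

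Next I would estimate, for a spike time $j$ and $n\geq j$, the quenched single-ancestor survival probability
\[
q_{j,n}(\varphi)\ :=\ Q_\varphi[Z_n(j)>0\mid M_j=1].
\]
An Agresti/Geiger--Kersting-type formula (accessible through Lemma~\ref{th:Key} applied to a single Dirac immigrant) provides a lower bound of the shape
\[
q_{j,n}(\varphi)\ \geq\ \Bigl(1+C\sum_{k=0}^{n-1}\eta_{j+k+1}\,e^{-(S_{j+k}-S_j)}\Bigr)^{-1},
\]
and independence of lineages then yields $Q_\varphi[Z_n(j)>0\mid M_j]\geq 1-\exp(-M_j\,q_{j,n}(\varphi))$. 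The LIL for $(S_k)$ (sharpened by the $\delta<6$ hypothesis) combined with a maximal-inequality / Borel--Cantelli estimate for the iid sequence $(\log_+\eta_m)_m$ (using the $L^2$ bound in~(3)) give
\[
\max_{0\leq k\leq n}\bigl(-(S_{j+k}-S_j)\bigr)\ +\ \max_{m\leq j+n}\log_+\eta_m\ =\ O\!\bigl(\sqrt{n\log\log n}\bigr)+o(\sqrt n)
\]
for $n\leq j^{2\beta-\varepsilon}$ and all large $j$, so that $-\log q_{j,n}(\varphi)=o(j^\beta)$ on this range. Since $\log M_j>j^\beta$ at a spike, $M_j\,q_{j,n}(\varphi)$ grows super-polynomially, and the probability that $Z_{n-j}(j)$ has already vanished is summable in $n$.

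A final application of the first Borel--Cantelli lemma then yields, $Q$-a.s.\ and for all sufficiently large $n$, a spike time $j(n)\in[n-n^\alpha,n]\subset[n-n^{2\beta-\varepsilon},n]$ with $Z_{n-j(n)}(j(n))>0$, whence $Z_n\geq Z_{n-j(n)}(j(n))\geq 1$. This gives $Q[Z_n=0\text{ i.o.}]=0$, and irreducibility upgrades this to transience. The hard part is clearly the second step: one must establish the quenched survival estimate sharply enough that the polynomial immigration barrier $e^{j^\beta}$ beats the ``potential barrier'' built from the random walk $S$ and the environmental variances $\eta_m$ over the time scale $n\sim j^{2\beta-\varepsilon}$. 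This is precisely where both the high-moment condition $\delta<6$ on $\log\mu_1$ in~(2) and the $L^2$ control on $\log_+(\Var_\varphi(\xi_1^{(1)})\mu_1^{-2})$ in~(3) are needed.
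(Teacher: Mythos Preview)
Your route is genuinely different from the paper's. The paper never analyses the sub-populations $Z_{\cdot}(j)$ or invokes an Agresti-type survival bound. Instead it passes to the linear proxy $X_n=\mu_nX_{n-1}+M_n$, uses exchangeability to replace $X_n$ by $W_n=\sum_{j=1}^n(\prod_{i<j}\mu_i)M_j$, and shows $\sum_n Q[W_n\le e^{\sqrt n}]<\infty$ by combining assumption~(4) with a Baum--Katz estimate on the random time $T(\kappa)=\inf\{k:Y_n\ge -n^\kappa\text{ for all }n\ge k\}$ for $\kappa\in(1/2,1/\lambda)$; the requirement $E_Q[T(\kappa)^\gamma]<\infty$ for some $\gamma>1$ is exactly what forces the moment hypothesis $\delta<6$. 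The passage from $X_n>e^{\sqrt n}$ to $Z_n\to\infty$ is then a one-step Chebyshev coupling $Q_\varphi[Z_n<\beta^{\sqrt n}X_n\mid B_{n-1}]\lesssim \alpha^{\sqrt{n-1}}\Var_\varphi(\xi_1^{(n)})\mu_n^{-2}$, and assumption~(3) enters only through Lemma~\ref{lem:Hilfslemma1} to make this series converge $Q$-a.s. Your spike-and-survive scheme is more hands-on with the branching structure and avoids both the autoregressive detour and the exchangeability trick; interestingly, with the right parameters it appears to need only a second-moment hypothesis on $\log\mu_1$, since the window over which you must control $S_{j+\cdot}-S_j$ is merely polynomial in $j$.

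Two points need repair. First, your appeal to Lemma~\ref{th:Key} for the survival lower bound is misplaced: that lemma is Key's positive-recurrence result for a \emph{subcritical} BPIRE and says nothing about quenched survival; the Agresti/Geiger--Kersting inequality you quote must be imported from outside the paper. Second, the ``LIL (sharpened by $\delta<6$)'' is not the right device: the LIL controls a single trajectory as time tends to infinity, whereas you need a bound on $\max_{0\le k\le n^\alpha}(-(S_{j+k}-S_j))$ that is uniform over \emph{all} large starting points $j$ on a fixed polynomial window, and this calls for a maximal/moment inequality plus Borel--Cantelli. With your restriction $\beta<1/\lambda$ the moment exponent this Borel--Cantelli demands is of order $2\lambda/(2-\lambda)$, which exceeds $6$ as soon as $\lambda>3/2$, so the stated hypotheses would not suffice. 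Dropping the upper bound on $\beta$ (take, e.g., $\beta>1/(2-\lambda)$ and $\alpha\in(\lambda\beta,\,2\beta-1)$) removes this obstruction and lets second moments do the work.
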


The recurrence criterion for the branching process was inspired by some work on random difference equations, e.g.\ \cite{Babillot97}, since there is some similarity between these processes. In \cite[p.\ 1196]{Goldie00} Goldie asks about recurrence and transience of random difference equations but characterizes only its positive recurrence.

The present paper is organized as follows.
Section \ref{sec:BPIRE} is dedicated to the proofs of Theorems \ref{th:recurrence_critBPIRE} and \ref{th:transience_critBPIRE} on critical BPIRE.
In Section \ref{sec:ERWRE}, the relation between the ERWRE and the branching process is established in order to prove Theorem \ref{th:1}. Examples are also given for the different cases of the theorem.

\section{Branching process in a random environment with immigration}
\label{sec:BPIRE}

First, let us deduce for our model an analogue result about positive recurrence for an irreducible subcritical BPIRE from Theorem 3.3 in \cite{Key87}.
\begin{lemma}
	\label{th:Key}
	Let $(Z_n)_{n\geq0}$ be a BPIRE with reproduction according to the sequence of p.g.f.\ $(\varphi_n)_{n\in\N}$ and immigration according to probability measures
	$(m_n)_{n\in\N}$. Assume that
	\begin{enumerate}
		\item $(\varphi_n,m_n)_{n\in\N}$ is i.i.d.\ under $Q$.
		\item $E_Q[\log_+ E_{(\varphi_n, m_n)_{n\in\N}}[M_1]]<\infty$.
		\item $E_Q[\log_+ \mu_1]<\infty$ and $E_Q[\log \mu_1]<0$ where $\mu_n:=\varphi_n'(1)$.
	\end{enumerate}
	Then $(Z_n)_{n\geq0}$ is positive recurrent.
\end{lemma}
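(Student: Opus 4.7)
The plan is to deduce the lemma from Theorem~3.3 of \cite{Key87} by identifying Key's version of the BPIRE (see Remark~\ref{remark:Key_def}) inside our model. Set
\[
\tilde Z_n := Z_n - M_n,
\]
i.e., the size of generation $n$ with the current immigrants removed. Substituting $Z_{n-1} = \tilde Z_{n-1} + M_{n-1}$ into the recursion $Z_n = \xi_1^{(n)} + \cdots + \xi_{Z_{n-1}}^{(n)} + M_n$ yields
\[
\tilde Z_n = \xi_1^{(n)} + \cdots + \xi_{\tilde Z_{n-1} + M_{n-1}}^{(n)},
\]
which is a BPIRE in Key's sense driven by the same i.i.d.\ environment $(\varphi_n, m_n)_{n\in\N}$.

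Next I would verify that the hypotheses of Theorem~3.3 in \cite{Key87} follow from (1)--(3): the i.i.d.\ structure is immediate, the subcriticality $E_Q[\log \mu_1] < 0$ together with $E_Q[\log_+ \mu_1] < \infty$ is assumption (3), and the requisite integrability of $\log_+ E_e[M_1]$ is assumption (2). Applying Key's theorem then gives that $(\tilde Z_n)_{n\geq 0}$ is positive recurrent and, in particular, admits an invariant probability measure $\tilde\pi$.

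To transfer positive recurrence to $(Z_n)$, I would work with a jointly stationary extension of the environment-augmented Markov chain. Starting $(\tilde Z_n, \varphi_n, m_n, M_n)_n$ from such a stationary realization, the pair $(\tilde Z_n, M_n)$ has a fixed joint law under which $Z_n = \tilde Z_n + M_n$ has some marginal $\pi$ for every $n$. A direct comparison shows that the transitions of $Z$ computed inside this joint system coincide with the Markov kernel of $(Z_n)$ as originally defined --- both output $Z_{n+1} = \xi_1^{(n+1)} + \cdots + \xi_{Z_n}^{(n+1)} + M_{n+1}$ with the same fresh environment and reproduction/immigration randomness --- so $\pi$ is invariant for the Markov chain $(Z_n)$. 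Combined with the assumed irreducibility of $(Z_n)$, this forces positive recurrence.

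The main obstacle is the dictionary step: aligning the definitions of the two processes and matching assumption~(2) with the exact integrability condition required by Theorem~3.3 of \cite{Key87}, especially because $m_n$ here is allowed to be a genuine random probability measure rather than a Dirac mass. Once this identification is in place, the transfer via joint stationarity is essentially a bookkeeping exercise.
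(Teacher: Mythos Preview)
Your identification $\tilde Z_n = Z_n - M_n$ is exactly the paper's $\tilde Z_n^{(1)}$, and reducing to Key's process is also the paper's strategy. However, two points in your execution do not go through as written.

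First, the sentence ``Applying Key's theorem then gives that $(\tilde Z_n)_{n\geq 0}$ is positive recurrent'' is imprecise. Under $Q$ the process $(\tilde Z_n)$ is in general \emph{not} a Markov chain: from $\tilde Z_{n+1}=\sum_{i=1}^{\tilde Z_n+M_n}\xi_i^{(n+1)}$ you see that the transition uses $M_n$, whose law $m_n$ sits in the same environment block $e_n=(\varphi_n,m_n)$ as the $\varphi_n$ that produced $\tilde Z_n$. When $\varphi_n$ and $m_n$ are dependent, the conditional law of $M_n$ given $(\tilde Z_1,\ldots,\tilde Z_n)$ depends on more than $\tilde Z_n$. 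What Key's Theorem~3.3 actually provides (and how the paper uses it) is that $Q[\tilde Z_n^{(1)}=j]$ converges to a probability measure $\tilde\pi$ on $\N_0$; it does not assert positive recurrence of a Markov chain.

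Second, your ``jointly stationary extension'' hides the real work. To pass from a limit law for $\tilde Z_n$ to one for $Z_n=\tilde Z_n+M_n$ you need the \emph{joint} limit of $(\tilde Z_n,M_n)$, and these two are not independent under $Q$ (both see $e_n$). The paper handles this by conditioning on the environment: under $Q_e$ the pieces are independent, and Key's Lemma~2.2 gives $Q_e$--a.s.\ convergence of $Q_e[\tilde Z_0^{(1-n)}=j]$. Dominated convergence then yields $\pi(v):=\lim_n Q[Z_n=v]$ explicitly, and Key's Theorem~3.3 is invoked only to check $\sum_v\pi(v)=1$. Since $(Z_n)$ \emph{is} Markov under $Q$, existence of this proper limit law gives positive recurrence. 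Your stationarity sketch can be repaired along exactly these lines, but it needs the quenched convergence (Lemma~2.2 in \cite{Key87}), not just Theorem~3.3.
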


\begin{proof}
  It is helpful to work with the alternative description of the BPIRE given in Definition \ref{def:BPIRE}.
  Like in \cite{Key87} we amplify this definition in the sense that we do not only consider branching processes $(Z_n(t))_{n\in\N_0}$ starting at positive times, but allow $t\in\Z$. Therefore, the random environment is assumed to be a sequence $e=(\varphi_x,m_x)_{x\in\Z}$ of i.i.d.\ random variables.

  Recall that for $n\geq1$ the BPIRE can be defined as $Z_n=\sum_{j=1}^{n}{Z_{n-j}(j)}$. Key considers in \cite{Key87} in a more general setting BPIRE of the form
    \[
     \tilde{Z}_n^{(1)}:=\sum_{j=1}^{n-1}{Z_{n-j}(j)}.
    \]
  We shift this process and set for $k\in\N_0$,
    \[
     \tilde{Z}_0^{(-k)}:=\sum_{j=1}^{k}{Z_{j}(-j)},
    \]
  which is a BPIRE at time $0$ that started in the past at time $-k$.

  Since $e$ is a sequence of i.i.d.\ random variables and since the branching processes $(Z_n(t))_{n\in\N_0}$, $t\in\Z$, are independent under $Q_e$, we get for $v\in\N_0$ and $n\in\N$,
  \begin{align*}
    Q[Z_n=v]&=Q[Z_0(n)+\tilde{Z}_n^{(1)}=v]\\
    &=\sum_{j=0}^{v}E_Q[Q_e[Z_0(n)=v-j,\;\tilde{Z}_n^{(1)}=j]]\\
    &=\sum_{j=0}^{v}E_Q[Q_e[Z_0(n)=v-j] Q_e[\tilde{Z}_n^{(1)}=j]]\\
    &=\sum_{j=0}^{v}E_Q[Q_e[Z_0(0)=v-j] Q_e[\tilde{Z}_0^{(1-n)}=j]].
  \end{align*}

  According to Lemma 2.2 in \cite{Key87}, $\lim_{n\to\infty}Q_e[\tilde{Z}_0^{(1-n)}=j]$ exists $Q$-a.s.\ for each $j\in\N_0$. Hence, by the dominated convergence theorem, $\pi(v):=\lim_{n\to\infty}Q[Z_n=v]$ exists for every $v\in\N_0$ and

  \begin{equation}
    \label{eq:pi}
    \pi(v)=\sum_{j=0}^{v}E_Q[Q_e[Z_0(0)=v-j] \lim_{n\to\infty}Q_e[\tilde{Z}_0^{(1-n)}=j]].
  \end{equation}

  Let us show now that $\pi$ is a probability measure on $\N_0$. By (\ref{eq:pi})
    \begin{align*}
      \sum_{v\in\N_0}\pi(v)&=\sum_{v\in\N_0}\sum_{j=0}^{v}E_Q[Q_e[Z_0(0)=v-j] \lim_{n\to\infty}Q_e[\tilde{Z}_0^{(1-n)}=j]]\\
      &=\sum_{j\in\N_0}\sum_{v\geq j}E_Q[Q_e[Z_0(0)=v-j] \lim_{n\to\infty}Q_e[\tilde{Z}_0^{(1-n)}=j]]\\
      &=\sum_{j\in\N_0}E_Q[\lim_{n\to\infty}Q_e[\tilde{Z}_0^{(1-n)}=j]].
    \end{align*}

  Note that for all $j\in\N_0$,
    \[
     \tilde{\pi}(j):=E_Q[\lim_{n\to\infty}Q_e[\tilde{Z}_0^{(1-n)}=j]]=\lim_{n\to\infty}E_Q[Q_e[\tilde{Z}_0^{(1-n)}=j]]=\lim_{n\to\infty}Q[\tilde{Z}_n^{(1)}=j]
    \]
  and $\tilde{\pi}$ defines a probability measure on $\N_0$ according to Theorem 3.3 in \cite{Key87}.
  Thus, $\sum_{v\in\N_0}\pi(v)=1$ and the subcritical BPIRE is positive recurrent, see e.g.\ \cite[Theorem 8.18]{Kallenberg02}
\end{proof}

Now we will prove the recurrence and transience criteria for a critical BPIRE.
The recurrence criteria in Theorem \ref{th:recurrence_critBPIRE} is inspired by a similar result for an autoregressive model defined by a random difference equation in the critical case stated in \cite{Babillot97}. Some of the ideas in \cite[p.\ 480f]{Babillot97} will be employed and transferred to our BPIRE-model.

\begin{proof}[Proof of Theorem \ref{th:recurrence_critBPIRE}]
Assume that $Q[M_1 = 0] < 1$ since $Z_n = 0$ $Q$-a.s.\ for all $n\in\N_0$ if $Q[M_1 = 0] = 1$.

Like in \cite{Babillot97}, let us define $Y_0:=0$ and
\begin{align*}
	Y_n&:=\log(\mu_1\cdot\ldots\cdot \mu_n).
\end{align*}
Then $(Y_n)_{n\geq0}$ is an oscillating random walk, i.e.\ $\limsup_{n\to\infty}(\pm Y_n)=\infty$ $Q$-a.s., see e.g.\ \cite[Proposition 9.14]{Kallenberg02}.
The strict descending ladder epochs, see also \cite[XII.1]{Feller22}, are defined by $L_0:=0$ and
\begin{align*}
	L_n&:= \inf\{k>L_{n-1}: Y_k<Y_{L_{n-1}}\}.
\end{align*}
Since $(Y_n)_{n\geq0}$ is oscillating, $L_n$ is $Q$-a.s.\ finite.
Let $L:=L_1$ and note that $E_Q[Y_L]<0$.

Following the strategy in \cite{Babillot97} we consider the subprocess $(Z_{L_n})_{n\geq0}$ and answer the following questions: Is this process a Markov Chain? Is it comparable to \bpire, more precisely, is it some kind and which kind of a branching process? Is it recurrent? The third question is central for the proof of the theorem since the recurrence of the subprocess yields the recurrence of the process itself.\\
Indeed, we show that $(Z_{L_n})_{n\geq0}$ is a subcritical BPIRE.

\begin{figure}[!ht]%
\centering

\scalebox{0.95}{\includegraphics[width=\columnwidth]{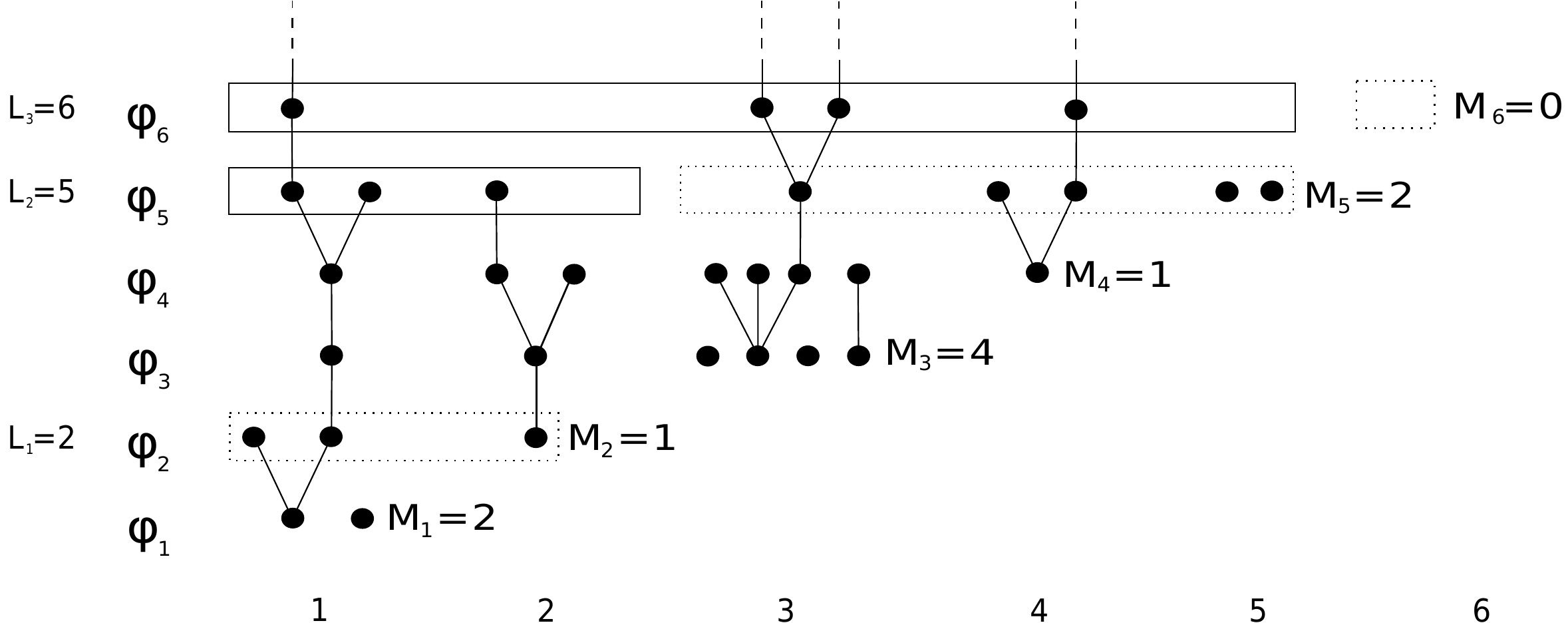}}
\caption{Process and subprocess.}
{\parbox{0.85\textwidth}{\small In this figure it is assumed that $L_1=2$, $L_2=5$ and $L_3=6$. The boxes with continuous outlines mark the offspring of the previous generation, whereas boxes with dotted outline gather the immigrants in generation 1, 2 and 3 of the subprocess $(Z_{L_n})_{n\geq0}$.}}
\label{fig_subprocess}
\end{figure}

A look at Figure \ref{fig_subprocess} or a calculation of $E_{(\varphi,M)}[s^{Z_{L_n}}]$, $0\leq s\leq 1$, $n\in\N$, shows that it is a BPIRE under $Q_{(\varphi,M)}$ with reproduction distribution given by the p.g.f.
\[	\lambda_n(s):=\varphi_{L_{n-1}+1}(\varphi_{L_{n-1}+2}(\ldots(\varphi_{L_n}(s))\ldots)), \quad 0\leq s\leq1,
\]
and measure $m_n$ for immigration in generation $n\in\N$ where $m_n$ is the distribution of
\[	\tilde{M}_n:=
		\sum_{j=L_{n-1}+1}^{L_n}{Z_{L_n-j}(j)}.
\]

The sequence $(\lambda_n, m_n)_{n\in\N}$ is i.i.d.\ under $Q$ since the increments of the ladder epochs $(L_n-L_{n-1})_{n\in\N}$ are i.i.d.\ under $Q$, see \cite[XII.1]{Feller22}.
Subcriticality of $(Z_{L_n})_{n\geq0}$ follows from
$E_Q[\log \lambda'_1(1)]=E_Q[Y_L]<0$.

Furthermore, the following arguments give that the subprocess is still Markovian under $Q$.
For $n\geq1$ and $i_1,\ldots, i_{n+1}\in\N_0$ the Markov property of \bpire under $Q_{(\varphi,M)}$ implies
  \begin{align}
    \label{eq:subprocessMarkovian}
    Q&[Z_{L_{n+1}}=i_{n+1}, Z_{L_n}=i_n, \ldots, Z_{L_1}=i_1]\notag\\
    &=\sum_{k\in\N} E_Q[\mathbbmss{1}_{L_n=k}\cdot Q_{(\varphi,M)}[Z_{L_{n+1}}=i_{n+1}, Z_{L_n}=i_n, \ldots, Z_{L_1}=i_1]]\notag\\
    &=\sum_{k\in\N} E_Q[\mathbbmss{1}_{L_n=k}\cdot Q_{\theta^k(\varphi,M),i_n}[Z_{L_{1}}=i_{n+1}] Q_{(\varphi,M)}[Z_{L_n}=i_n, \ldots, Z_{L_1}=i_1]]
  \end{align}
  where $\theta^k$ denotes the shift $\theta^k(\varphi_j,M_j)_{j\in\N}=(\varphi_{k+j},M_{k+j})_{j\in\N}$ and $i_n$ in $Q_{\theta^k(\varphi,M),i_n}$ denotes the number of ancestors. Since $(\varphi_j,M_j)_{j\in\N}$ is i.i.d.\ it follows that (\ref{eq:subprocessMarkovian}) equals
  \begin{align*}
    &\sum_{k\in\N} E_Q[Q_{\theta^k(\varphi,M),i_n}[Z_{L_{1}}=i_{n+1}]] E_Q[\mathbbmss{1}_{L_n=k}\cdot  Q_{(\varphi,M)}[Z_{L_n}=i_n, \ldots, Z_{L_1}=i_1]]\\
    & = Q_{i_n}[Z_{L_1}=i_{n+1}] Q[Z_{L_n}=i_n, \ldots, Z_{L_1}=i_1].
  \end{align*}

To apply Lemma \ref{th:Key}, we have to check if $E_Q[\log_+ E_{(\varphi, M)}[\tilde{M}_1]]<\infty$ is satisfied.
The integrability of $\log(1+E_{(\varphi, M)}[\tilde{M}_1])$ implies integrability of $\log_+ E_{(\varphi, M)}[\tilde{M}_1]$ and vice versa.
We follow the strategy of the proof of Lemma 5.49 in \cite{Elie82}.
According to \cite[Lemma 5.23]{Elie82}, $\log(1+E_{(\varphi, M)}[\tilde{M}_1])$ is integrable if and only if 
  \begin{equation}
   \label{eq:Elie1}
   \limsup_{n\to\infty}E_{(\varphi, M)}[\tilde{M}_n]^{1/n}<\infty \text{ $Q$-a.s.}
  \end{equation}

Since $E_Q[|\log \mu_1|^2]<\infty$, Theorem 1a in \cite[XII.7, p.\ 414f]{Feller22} can be applied and yields $Q[L>n]\sim c/\sqrt{n}$ for some constant $c>0$. This implies $E_Q[L^{\beta}]<\infty$ for every $0<\beta<1/2$. Let $\beta=\frac{1}{2+\epsilon}$.
E.g.\ by \cite[Theorem 4.23]{Kallenberg02}, we get that $Q$-a.s.\
\begin{equation}
	\limsup_{n\to\infty}\frac{L_n^{\beta}}{n}=0.
	\label{eq:L_n}
\end{equation}

For the proof of (\ref{eq:Elie1})
note that by the definitions of $Z_{k}(j)$ and since $\mu_{j}\cdot\ldots\cdot\mu_{L_n}<1$ for each $j<L_n$ we obtain $Q$-a.s.\
\begin{align}
  \label{eq:Elie2}
 	E_{(\varphi, M)}[\tilde{M}_n]
	&=\sum_{j=L_{n-1}+1}^{L_n}E_{(\varphi, M)}[Z_{L_n-j}(j)]\notag\\
	&=\sum_{j=L_{n-1}+1}^{L_n} M_j\mu_{j+1}\cdot\ldots\cdot\mu_{L_n}\notag\\
	&\leq \sum_{j=1}^{L_n}M_j.
\end{align}

Now, an analog calculation as the one in \cite[p.\ 336]{Elie82} yields (\ref{eq:Elie1}). For completeness let us give the full argument.
Inequality (\ref{eq:Elie2}) gives
\begin{align}
  \label{eq:Elie4}
	\limsup_{n\to\infty} E_{(\varphi, M)}[\tilde{M}_n]^{1/n}
		&\leq \limsup_{n\to\infty} \exp\left(\frac{1}{n}\log\bigg(\sum_{j=1}^{L_n}{M_j}\bigg)\right)\notag\\
		&\leq \limsup_{n\to\infty}\exp\left(\frac{1}{L_n^{\beta}}\log\bigg(1+\sum_{j=1}^{L_n}M_j\bigg)\cdot \frac{L_n^{\beta}}{n}\right).
\end{align}
Since
  \[	\log\left(1+\sum_{j=1}^{L_n}{M_j}\right)
			\leq \sup_{j=1}^{L_n}{\log(1+ M_j)} + \log L_n
  \]
and $\log L_n/L_n^{\beta}\to 0$ for $n\to\infty$, we get

  \begin{align}
    \label{eq:Elie3}
    \limsup_{n\to\infty} \frac{1}{L_n^{\beta}} \log\left(1+\sum_{j=1}^{L_n}{M_j}\right)
    & \leq\limsup_{n\to\infty}\frac{1}{L_n^{\beta}}\left(\sup_{j=1}^{L_n}{(\log(1+ M_j))^{\frac{1}{\beta}}}\right)^{\beta}\notag\\
    & \leq\limsup_{n\to\infty}\left(\frac{\sum_{j=1}^{L_n}{(\log(1+ M_j))^{\frac{1}{\beta}}}}{L_n}\right)^{\beta}.
  \end{align}

Recall that $E_Q[(\log_+ M_1)^{\frac{1}{\beta}}]=E_Q[(\log_+ M_1)^{2+\epsilon}]<\infty$. Hence, the right-hand side of (\ref{eq:Elie3}) is finite $Q$-a.s.\ by the law of large numbers and (\ref{eq:Elie1}) follows by (\ref{eq:L_n}), (\ref{eq:Elie4}) and (\ref{eq:Elie3}).

Summing up, the subprocess $(Z_{L_n})_{n\geq0}$ is a BPIRE with reproduction according to $(\lambda_n)_{n\in\N}$ and immigration distribution $(m_n)_{n\in\N}$ where $(\lambda_n,m_n)_{n\in\N}$
is i.i.d.\ and $E[\log E_{(\varphi, M)}[\tilde{M}_1]]<\infty$.
Applying Lemma \ref{th:Key} gives that $(Z_{L_n})_{n\geq0}$ is positive recurrent, in particular recurrent. Hence, \bpire is recurrent.
\end{proof}

Before proving Theorem \ref{th:transience_critBPIRE} let us deduce a useful result from Theorem 2 in \cite{Zerner02}.

\begin{lemma}
  \label{lem:Hilfslemma1}
  Let $d\in\N$, $c>0$ and $0<a<1$. Assume that $\{V, V_{i,n}; i,n\in\N_0\}$ is a family of i.i.d., almost surely nonnegative random variables. Then,
  $E_Q[(\log_+V)^d]<\infty$ if and only if $\sum_{n\in\N_0}a^n\sum_{i=0}^{\lfloor c n^{d-1}\rfloor}V_{i,n}<\infty$ $Q$-a.s.\
\end{lemma}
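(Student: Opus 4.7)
The plan is a truncation-plus-Borel--Cantelli argument at the natural scale $a^{-n}$. Write $S_n:=\sum_{i=0}^{\lfloor cn^{d-1}\rfloor}V_{i,n}$ and $\beta:=-\log a>0$. A layer-cake computation gives the standard equivalence
\[
  E_Q\bigl[(\log_+V)^d\bigr]<\infty\;\Longleftrightarrow\;\sum_{n\in\N_0}n^{d-1}\,Q[V>a^{-n}]<\infty,
\]
which is the bridge between the moment condition and the Borel--Cantelli estimates below.

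For the implication from the series to the moment, I would argue by contraposition. Suppose $E_Q[(\log_+V)^d]=\infty$. The events $A_n:=\{\max_{0\le i\le cn^{d-1}}V_{i,n}>a^{-n}\}$ are independent across $n$, and the elementary inequality $1-(1-p)^k\ge\tfrac12\min(1,kp)$ yields
\[
  Q[A_n]\;\ge\;\tfrac12\min\!\bigl(1,\,(\lfloor cn^{d-1}\rfloor+1)\,Q[V>a^{-n}]\bigr),
\]
so that $\sum_n Q[A_n]=\infty$. The second Borel--Cantelli lemma gives $A_n$ infinitely often $Q$-a.s., and since on $A_n$ we have $a^nS_n\ge a^n\cdot a^{-n}=1$, the series $\sum_n a^nS_n$ diverges $Q$-a.s.

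For the converse, assume $E_Q[(\log_+V)^d]<\infty$ and split each variable at level $a^{-n}$, writing $V_{i,n}=V_{i,n}^{\le}+V_{i,n}^{>}$ with $V_{i,n}^{\le}:=V_{i,n}\,\mathbf{1}_{\{V_{i,n}\le a^{-n}\}}$. The first Borel--Cantelli lemma applied to the independent events $\{V_{i,n}>a^{-n}\}$ (indexed over $n\in\N_0$ and $0\le i\le cn^{d-1}$, with total probability bounded by a constant times $\sum_n n^{d-1}Q[V>a^{-n}]<\infty$) shows that only finitely many of them occur; hence the ``$V^{>}$''-part of the series is $Q$-a.s.\ a finite sum of finite terms. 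For the ``$V^{\le}$''-part I would compute the expectation by Fubini,
\[
  E_Q\!\!\left[\sum_n a^n\!\!\sum_{i=0}^{\lfloor cn^{d-1}\rfloor}\!\!V_{i,n}^{\le}\right]\le C\int_0^\infty\!\! Q[V>t]\!\!\sum_{n\ge(\log t)/\beta}\!\!a^n n^{d-1}\,dt,
\]
and bound the inner tail by $O\bigl(a^{N(t)}N(t)^{d-1}\bigr)=O\bigl((\log t)^{d-1}/t\bigr)$ for $t\ge 1$, with $N(t)\asymp\log t/\beta$. The substitution $u=\log t$ then turns the outer integral into a constant multiple of $E_Q[(\log_+V)^d]<\infty$, so the truncated series has finite expectation and is therefore a.s.\ finite.

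The main technical point is this Fubini computation together with the sharp geometric-polynomial tail estimate $\sum_{n\ge N}a^nn^{d-1}=O(a^NN^{d-1})$ needed to recover exactly the $(\log_+)^d$-moment. Everything else --- the two Borel--Cantelli lemmas, the independence of the underlying variables, and the layer-cake translation of the moment condition --- is routine once the scale $a^{-n}$ for truncation and the maximum-events $A_n$ have been identified. The attribution to Theorem~2 in \cite{Zerner02} presumably packages precisely this moment-vs.-series dichotomy and can be invoked as a black box, which would shorten but not alter the shape of the argument.
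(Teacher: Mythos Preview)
Your argument is correct, and it takes a genuinely different route from the paper's proof. The paper proceeds by induction on $d$: the base case $d=1$ combines Lemma~4 of \cite{Zerner02} (to handle the finite inner sum) with the classical equivalence $E_Q[\log_+W]<\infty\Leftrightarrow\sum_n a^nW_n<\infty$ a.s.\ for i.i.d.\ nonnegative $W_n$; the inductive step uses Theorem~2 of \cite{Zerner02}, which says $E_Q[(\log_+V)^{d+1}]<\infty$ iff $E_Q[(\log_+V_0)^d]<\infty$ with $V_0=\sum_n a^nV_{0,0,n}$, and then rearranges a triple sum so that the number of inner summands grows like $k^d$. So Theorem~2 of \cite{Zerner02} is not the full dichotomy you guessed at the end---it is the one-step reduction $d+1\mapsto d$ that drives the induction.

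Your approach is instead a direct truncation at the scale $a^{-n}$ plus two Borel--Cantelli lemmas, with a Fubini computation and the geometric--polynomial tail estimate $\sum_{n\ge N}a^nn^{d-1}=O(a^NN^{d-1})$ to close the loop. This is more elementary and self-contained (it does not appeal to \cite{Zerner02} or \cite{Lukacs75}), and it makes the role of the exponent $d$ transparent through the layer-cake bridge $\sum_n n^{d-1}Q[V>a^{-n}]$; the paper's argument is shorter on the page but relies on external black boxes and an inductive restructuring of the sum. Both are valid; yours would stand on its own without the cited references.
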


\begin{proof}
  The Lemma is shown by induction over $d$.
  Lemma 4 in \cite{Zerner02} gives that $E_Q[\log_+V]<\infty$ if and only if $E_Q[\log_+(\sum_{i=0}^{\lfloor c \rfloor}V_{i,0})]<\infty$. Furthermore, the latter is equivalent to the almost sure convergence of $\sum_{n\in\N_0}a^n\sum_{i=0}^{\lfloor c\rfloor}V_{i,n}$, see for instance \cite[Theorem 5.4.1]{Lukacs75}.

  Let the assertion hold for $d\geq 1$. It will be useful to consider random variables with three indices. Therefore, let $\{V, V_{j,i,n}; j,i,n\in\N_0\}$ be i.i.d. Theorem 2 in \cite{Zerner02} yields that $E_Q[(\log_+V)^{d+1}]$ is finite if and only if $E_Q[(\log_+V_0)^{d}]$ is finite, where $V_0=\sum_{n\in\N_0}a^n V_{0,0,n}$.
  By induction hypothesis, $E_Q[(\log_+V_0)^{d}]<\infty$ is equivalent to the almost sure convergence of
    \[
     \sum_{j\in\N_0}a^j\sum_{i=0}^{\lfloor c j^{d-1} \rfloor}\sum_{n\in\N_0}a^n V_{j,i,n}=\sum_{j\in\N_0}\sum_{n\in\N_0}a^{j+n}\sum_{i=0}^{\lfloor c j^{d-1}\rfloor} V_{j,i,n}=\sum_{k\in\N_0}a^{k}\sum_{j=0}^{k}\sum_{i=0}^{\lfloor c j^{d-1}\rfloor} V_{j,i,k-j}.
    \]

  The number of summands in $\sum_{j=0}^{k}\sum_{i=0}^{\lfloor c j^{d-1}\rfloor} V_{j,i,k-j}$ is asymptotically equal to $k^d c/d$ for $k\to\infty$. For $d\geq1$ note that the almost sure convergence of $\sum_{n\in\N_0}a^n\sum_{i=0}^{\lfloor\tilde{c} n^{d}\rfloor}V_{i,n}$ for some $\tilde{c}>0$ and all $0<a<1$ implies the almost sure convergence for all $\tilde{c}>0$. Hence, the lemma follows.
\end{proof}

Let us now prove the transience criterion for an irreducible critical BPIRE.

\begin{proof}[Proof of Theorem \ref{th:transience_critBPIRE}]
  The strategy of the proof is similar to the one of Theorem 2.2 in \cite{Bauernschubert12}. First we discuss an autoregressive model defined by the critical random
  difference equation $X_n:=\mu_n X_{n-1}+M_n$ for $n\in\N$ and $X_0=0$. We will show that $Q$-a.s.
	\begin{equation}
	 X_n> e^{\sqrt{n}} \text{ for $n$ large.}
	\label{eq:liminf}
	\end{equation}
  Thereafter, \arp is coupled with the critical BPIRE \bpire to obtain the transience.
  
  Like in the proof of Theorem \ref{th:recurrence_critBPIRE} we define $Y_n=\log\mu_1+\ldots+\log\mu_n$. Let $\frac{1}{2}<\kappa<\frac{1}{\lambda}$ and
  \[ T(\kappa):=\inf\{k\in\N: Y_n\geq -n^{\kappa} \text{ for all } n\geq k\}.
  \]
  By the law of the iterated logarithm $T(\kappa)$ is finite $Q$-a.s. Choose $0<\delta<6$ such that $\gamma:=\delta\kappa-2>1$. Then
  \begin{align*}
    E_Q[T(\kappa)^{\gamma}] &= \sum_{n\in\N} n^{\gamma} Q[T(\kappa)=n]\\
    &\leq 1+\sum_{n\geq 2} n^{\gamma} Q[Y_{n-1}< -(n-1)^{\kappa}]\\
    &\leq 1+\sum_{n\geq 2} n^{\gamma} Q[|Y_{n-1}|> (n-1)^{\kappa}].
  \end{align*}

  This is finite due to the complete convergence theorem of Baum and Katz in \cite[Theorem 3, (a) $\Rightarrow$ (b)]{Baum65} and the assumptions $E_Q[|\log\mu_1|^{\delta}]<\infty$
  and $E_Q[\log \mu_1]=0$. Therefore,
    \begin{equation}
      E_Q[T(\kappa)^{\gamma}]<\infty.
      \label{eq:finiteEofT}
    \end{equation}

  Consider now the autoregressive model. The recursion of $X_n$ yields
    \[ X_n= M_n+\mu_n M_{n-1}+\mu_n\mu_{n-1}M_{n-2}+\ldots+\mu_2\ldots\mu_n M_1.
    \]
    
  Set
    \[ W_n:=M_1+\mu_1 M_2+\mu_1\mu_2 M_3+\ldots+\mu_1\ldots\mu_{n-1} M_n
    \]
  for $n\in\N$. Then, exchangeability implies for all $n\in\N$,
    \begin{equation}
    Q[X_n> e^{\sqrt{n}}]=Q[W_n> e^{\sqrt{n}}].
\label{eq:exchangeability}
\end{equation}

   Recall that $\frac{1}{2}<\kappa<\frac{1}{\lambda}$ and $\gamma>1$. By the assumption of the theorem there is some constant $c_1>0$ such that $Q\left[M_{1}> e^{2n^{\kappa}}\right]> c_1\cdot n^{-\kappa\lambda}$ for large $n$. Thus, we get for a suitable constant $c_2>0$ the following bound from above for large $n\in\N$.
  \begin{align*}
    \label{eq:boundW_n1}
    Q[W_n\leq e^{\sqrt{n}}, T(\kappa)< n^{\frac{1}{\gamma}}] 
     &\leq Q\Bigg[\bigcap_{n^{\frac{1}{\gamma}}\leq i < n}\left\{\mu_1\ldots\mu_i M_{i+1}\leq e^{\sqrt{n}}\right\},T(\kappa)<  
     n^{\frac{1}{\gamma}} \Bigg]\notag\\
     &= Q\Bigg[\bigcap\limits_{n^{\frac{1}{\gamma}}\leq i < n}\left\{M_{i+1}\leq e^{\sqrt{n}-Y_i}\right\}, T(\kappa)<n^{\frac{1}{\gamma}} \Bigg]\notag\\
     &\leq Q\Bigg[\bigcap\limits_{n^{\frac{1}{\gamma}}\leq i < n}\left\{M_{i+1}\leq e^{2n^{\kappa}}\right\} \Bigg]\notag\\
     &\leq \left(1-Q\left[M_{1}> e^{2n^{\kappa}}\right]\right)^{n-n^{\frac{1}{\gamma}}-1}\notag \\
     &\leq e^{- c_2 n^{1-\kappa\lambda}}.
  \end{align*}
  
  Hence
  \begin{align*}
    Q[W_n\leq e^{\sqrt{n}}]&\leq Q[W_n\leq e^{\sqrt{n}}, T(\kappa)< n^{\frac{1}{\gamma}}] +Q[T(\kappa)\geq n^{\frac{1}{\gamma}}]\\
     &\leq e^{- c_2 n^{1-\kappa\lambda}} + Q[T(\kappa)\geq n^{\frac{1}{\gamma}}]
  \end{align*}
  for large $n$. By (\ref{eq:exchangeability}), (\ref{eq:finiteEofT}) and since $\kappa\lambda <1$, we obtain
  \[ \sum\limits_{n\geq1}Q\left[X_n\leq e^{\sqrt{n}}\right]<\infty
  \]
  and (\ref{eq:liminf}) follows by Borel-Cantelli.
  
  The next step is to couple \arp and $(Z_n)_{n\geq0}$. As can be seen by the notations, the increments of the difference equation correspond to the number of immigrants in the BPIRE and the
  multiplication factor $\mu_n$ to the expected number of offspring of an individual in generation $n-1$.

  Result (\ref{eq:liminf}) implies $Q\left[\bigcap_{n\geq 1}\{X_n>e^{\sqrt{n}}\}\right]>0$ and hence $Q_{\varphi}\left[\bigcap_{n\geq 1}\{X_n>e^{\sqrt{n}}\}\right]>0$ on some $Q$-non-null set $D$. Fix $e^{-1}<\beta<1$. We will show that
  \begin{equation}
    Q_{\varphi}\left[\bigcap\limits_{n\in\N}\{Z_n\geq \beta^{\sqrt{n}} X_n\}\Bigg| \bigcap\limits_{k\in\N}\{X_k> e^{\sqrt{k}}\}
    \right]>0
    \label{eq:Zn>betaXn}
  \end{equation}
  on $D$.
  Hence,
  \[ Q\left[\lim\limits_{n\to\infty}Z_n=\infty\right]>0
  \]
  since $e\beta>1$, and the transience follows.
  
  For $n\in\N_0$ let
  \[ B_n:=\bigcap_{j=1}^{n}\{Z_j\geq \beta^{\sqrt{j}} X_j\}\cap\bigcap_{k\geq 1}\{X_k >e^{\sqrt{k}}\}.
  \]
  
  By the definition of \bpire in Definition \ref{def:BPIRE} we get on $D$
  \begin{align*}
    &Q_{\varphi}\left[Z_n <\beta^{\sqrt{n}} X_n, B_{n-1}\right]
    =\sum\limits_{k\in\N}Q_{\varphi}\left[ Z_n <\beta^{\sqrt{n}} X_n,Z_{n-1}=k, B_{n-1}\right] \notag\\
    &=\sum\limits_{k\in\N}Q_{\varphi}\left[ \mu_n k-\sum\limits_{i=1}^{k}{\xi_i^{(n)}}> \mu_n
    (k-\beta^{\sqrt{n}} X_{n-1})+ M_n(1-\beta^{\sqrt{n}}),Z_{n-1}=k, B_{n-1}\right] \notag\\
    &\leq\sum\limits_{k>(e\beta)^{\sqrt{n-1}}}Q_{\varphi}\left[ \mu_n k-\sum\limits_{i=1}^{k}{\xi_i^{(n)}} 
    >(1-\beta^{\sqrt{n}-\sqrt{n-1}})\mu_n k\right]\cdot Q_{\varphi}\left[Z_{n-1}=k, B_{n-1}\right].
  \end{align*}
  Chebyshev's inequality implies
  \begin{align*}
    Q_{\varphi}\left[ \mu_n k-\sum\limits_{i=1}^{k}{\xi_i^{(n)}}>(1-\beta^{\sqrt{n}-\sqrt{n-1}})\mu_n k\right]
    &\leq  \frac{\Var_{\varphi}\left({\xi_1^{(n)}}\right)}{(1-\beta^{\sqrt{n}-\sqrt{n-1}})^2\mu_n^2 k}\; .
  \end{align*}

  Note that for large $n$, $1-\beta^{\sqrt{n}-\sqrt{n-1}}\geq -\frac{1}{2}(\sqrt{n}-\sqrt{n-1})\log\beta$. Hence,
  \begin{align*}
    Q_{\varphi}\left[Z_n <\beta^{\sqrt{n}} X_n, B_{n-1}\right] \leq \frac{4\Var_{\varphi}\left({\xi_1^{(n)}}\right)}{ \mu_n^2
    (\log\beta)^2 (\sqrt{n}-\sqrt{n-1})^2(e\beta)^{\sqrt{n-1}}}\cdot Q_{\varphi}\left[B_{n-1}\right]
  \end{align*}
  for large $n$. Thus, it holds for some $0<\alpha<1$ that $Q_{\varphi}\left[Z_n <\beta^{\sqrt{n}} X_n | B_{n-1}\right]\leq \alpha^{\sqrt{n-1}}\Var_{\varphi}(\xi_1^{(n)})\mu_n^{-2}$ for large $n$ and
    \begin{align*}
      \sum_{n\in\N_0}\alpha^{\sqrt{n}}\Var_{\varphi}(\xi_1^{(n+1)})\mu_{n+1}^{-2}
       &=\sum_{k\in\N_0}\sum_{n=k^2}^{(k+1)^2-1}\alpha^{\sqrt{n}}\Var_{\varphi}(\xi_1^{(n+1)})\mu_{n+1}^{-2} \\
       &\leq \sum_{k\in\N_0}\alpha^{k}\sum_{n=k^2}^{(k+1)^2-1}\Var_{\varphi}(\xi_1^{(n+1)})\mu_{n+1}^{-2}.
    \end{align*}

  Applying assumption $E_Q[\big(\log_+(\Var_{\varphi}(\xi_1^{(1)})\cdot\mu_1^{-2})\big)^2]<\infty$, Lemma \ref{lem:Hilfslemma1} yields on $D$
  \begin{align}
    \label{eq:sum_Qn}
    \sum_{n\in\N}&Q_{\varphi}\left[Z_n <\beta^{\sqrt{n}} X_n \Big| B_{n-1}\right]<\infty.
  \end{align}

  Furthermore, we have on $D$
  \begin{align}
    Q_{\varphi}\left[Z_n \geq \beta^{\sqrt{n}} X_n \Big| B_{n-1}\right]\geq Q_{\varphi}\left[\sum\limits_{i=1}^{Z_{n-1}}{\xi_i^{(n)}} \geq \mu_n \beta^{\sqrt{n}-\sqrt{n-1}} Z_{n-1} \Big| B_{n-1}\right]>0
    \label{eq:eachQn>0}
  \end{align}
  for all $n\geq 1$.
  The left-hand side of (\ref{eq:Zn>betaXn}) equals $\prod_{n\in\N}Q_{\varphi}\left[Z_n \geq \beta^{\sqrt{n}} X_n | B_{n-1}\right]$
  and thus, (\ref{eq:Zn>betaXn}) follows from (\ref{eq:sum_Qn}) and (\ref{eq:eachQn>0}).
\end{proof}

\begin{remark}
  The recurrence and transience criteria in Theorems \ref{th:recurrence_critBPIRE} and \ref{th:transience_critBPIRE} hold in the same way for a BPIRE with one ancestor. Starting with $Z_0=0$ in Definition \ref{def:BPIRE} is only due to the proof of Lemma \ref{th:Key}.
  To make sure that the BPIRE dies out infinitely often $Q$-a.s.\ in Theorem \ref{th:recurrence_critBPIRE}, we have to assume additionally --- if the process starts with one ancestor --- that $Q[\varphi_1(0)>0, M_1=0]>0$.
\end{remark}

\section{Excited random walk in random environment}
\label{sec:ERWRE}

The aim of this section is to transfer the recurrence and transience criteria from the BPIRE to the ERWRE.
Therefore, note the well-known connection between branching processes with migration and excited random walks. For a simple symmetric random walk disturbed by cookies, this idea was employed for instance in \cite{Basdevant08b, Basdevant08a, Zerner08}. In \cite{Bauernschubert12} the author explains the connection between a left-transient RWRE disturbed by cookies of maximal strength and a subcritical BPIRE.
In this section we establish an analogous relation between a critical BPIRE and a recurrent RWRE disturbed by cookies of maximal strength. The purpose of this connection is to prove Theorem \ref{th:1}.

Let us introduce the following notation and variables to describe the connection. For detailed explanations we refer to \cite{Zerner08} or \cite{Bauernschubert12}.

Let $X_i^{(j)}$, $i\in\N$, $j\in\Z$, be a family of independent $\pm1$-valued random variables on $\Omega'$,
such that
  \[P_{z,\omega}[X_i^{(j)}=1]=\omega(j,i)\text{  and  }P_{z,\omega}[X_i^{(j)}=-1]=1-\omega(j,i).\]
Then the ERWRE can be realized recursively by
  \[S_{n+1}=S_n + X_{\#\{m\leq n:\: S_m=S_n\}}^{(S_n)} \qquad\text{ for } n\geq0.\]

The events $\{X_i^{(j)}=1\}$ and $\{X_i^{(j)}=-1\}$ are called success and failure respectively. Furthermore, set
  \begin{align*}
    \xi_j^{(k)}&:= \# \{\text{successes in $\big(X_i^{(k)}\big)_{i>M_k}$ between the $(j-1)$-st and the $j$-th failure}\}\; ,\\
    V_0&:=1\; ,\\
    V_k&:=\xi_1^{(k)}+\ldots+\xi_{V_{k-1}}^{(k)}+M_k\; .
  \end{align*}

Under Assumption \ref{as_A},
it is obtained
that $(V_k)_{k\geq 0}$ is a BPIRE under $P_1$, with one ancestor, immigrants $(M_k)_{k\geq1}$ and progeny given by $(\xi_i^{(j)})_{i,j\in\N}$. Remark that $\xi_i^{(j)}$ has geometric distribution with parameter $(1-p_j)$, ($geo_{\N_0}(1-p_j)$ for short), i.e.\ $P_{1,\omega}[\xi_i^{(j)}=n]=p_j(\omega)^n\cdot (1-p_j(\omega))$ for $n\in\N_0$ and $\mathbb{P}$-a.e.\ $\omega\in\Omega$.

The time when the ERWRE first hits $k\in\Z$, is denoted by
  \[T_k:=\inf\{n\in\N: S_n=k\}.\]

The recurrence from the right of the ERWRE is defined analogously to \cite[p.\ 1962]{Zerner08} or \cite{Bauernschubert12}.

\begin{definition}
\label{def:rekurrentvonrechts}
  The ERWRE is called \emph{recurrent from the right}, if the first excursion to the right of $0$, if there is any, is $P_0$-a.s.\ finite, i.e.\ $P_1[T_0 <\infty]=1$.
\end{definition}

  Note that, by the assumption of a recurrent RWRE and cookies inducing a drift to the right, it follows that
    \begin{equation}
      \label{eq:limsup}
      P_0[\limsup_{n\to\infty}S_n=+\infty]=1.
    \end{equation}
  Indeed, for $t>0$ and $k\in\N$, monotonicity of $P_{0,\omega}[T_k \leq t]$ with respect to the environment holds according to Lemma 15 in \cite{Zerner05}, which can be extended from $\omega \in ([1/2,1]^{\N})^{\Z}$ to the current situation $\omega\in\Omega=([0,1]^{\N})^{\Z}$.

  Using ($\ref{eq:limsup}$) and $P_0[\liminf_{n\to\infty}S_n\in\{\pm\infty\}]=1$ instead of Lemma 3.2 in \cite{Bauernschubert12} we can prove the next result about the connection between ERWRE and BPIRE analogously to Lemma 3.5, Lemma 3.6 and Lemma 3.7 in \cite{Bauernschubert12}.

  \begin{lemma}
    \label{lem:recurrence}
    Let Assumption \ref{as_A} hold.
    The ERWRE $(S_n)_{n\geq0}$ is recurrent from the right if and only if $(V_k)_{k\geq0}$ is recurrent in $0$, i.e.\ $P_1[\exists k\in\N: V_k=0]=1$.\\
    If $(S_n)_{n\geq0}$ is recurrent from the right, then all excursions are $P_0$-a.s.\ finite.\\
    If $(S_n)_{n\geq0}$ is not recurrent from the right, then $P_0\left[\lim_{n\to\infty}S_n=+\infty\right]>0$.
  \end{lemma}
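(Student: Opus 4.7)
The plan is to identify the branching process $(V_k)_{k\geq 0}$ with the up-crossing counts of a right excursion of $(S_n)$ and then to use the two zero-one-type facts $(\ref{eq:limsup})$ and $P_0[\liminf_{n\to\infty}S_n\in\{\pm\infty\}]=1$, together with the monotonicity of $P_{z,\omega}[T_k\leq t]$ in $\omega$, to translate branching-process statements into ERWRE statements. This parallels the proof of Lemmas~3.5--3.7 in \cite{Bauernschubert12}, with the quoted facts replacing the inputs specific to the left-transient setting treated there.

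For the first equivalence, I would first verify that $V_k$, built from the coins $(X_i^{(j)})$ via the recursion $V_k=\xi_1^{(k)}+\ldots+\xi_{V_{k-1}}^{(k)}+M_k$, records $P_1$-a.s.\ the number of up-crossings from $k$ to $k+1$ performed by $(S_n)$ during the first right excursion, whenever that excursion reaches level $k$: at site $k$ each success step is an up-crossing and each failure is a down-crossing, and $\xi_i^{(k)}$ is exactly the number of successes between two consecutive failures at level $k$ after the $M_k$ cookies are consumed. It then follows that $\{T_0<\infty\}$ agrees $P_1$-a.s.\ with $\{V_k=0\text{ for some }k\}$: if the excursion ends at maximal height $K$, the walker never leaves level $K$ upward, so $V_K=0$; conversely, if $V_k=0$ for some $k$, the excursion performs only finitely many upward steps, which together with $(\ref{eq:limsup})$ and the dichotomy $\liminf S_n\in\{\pm\infty\}$ forces a return to $0$. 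Taking $P_1$-probabilities yields the equivalence.

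For the second claim, assume $P_1[T_0<\infty]=1$. After the first right excursion terminates at its return time $\tau_1$, the walker is back at $0$ and the only change in the environment is that finitely many cookies at previously visited sites have been consumed. The extension of Zerner's Lemma~15 to $\omega\in\Omega$ noted after $(\ref{eq:limsup})$ shows that this cookie-depleted environment makes the walker stochastically less likely to escape, so the next right excursion is $P_0$-a.s.\ finite as well; iterating, every right excursion is $P_0$-a.s.\ finite. Combined with $\limsup S_n=+\infty$ $P_0$-a.s., this rules out $\liminf S_n=+\infty$, so the dichotomy $\liminf S_n\in\{\pm\infty\}$ gives $\liminf S_n=-\infty$ $P_0$-a.s., hence every excursion (right or left) is $P_0$-a.s.\ finite.

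For the third claim, non-recurrence from the right means $P_1[T_0=\infty]>0$. On this event the walker started at $1$ never revisits $0$, so $S_n\geq 1$ for all $n\geq 0$, and the dichotomy $\liminf S_n\in\{\pm\infty\}$ upgrades this to $\lim_{n\to\infty}S_n=+\infty$. Since $P_0[S_1=1]=\mathbb{E}[\omega(0,1)]>0$ by \ref{as_A1}, we get $P_0[\lim_{n\to\infty}S_n=+\infty]\geq P_0[S_1=1]\cdot P_1[T_0=\infty]>0$. The delicate step I expect is the second one: successive right excursions are not independent under $P_0$ because cookie consumption couples them through the environment, so propagating finiteness from the first excursion to all later ones really requires the stochastic-domination input provided by the extended monotonicity lemma, rather than a naive i.i.d.\ Borel--Cantelli argument.
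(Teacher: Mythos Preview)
Your proposal is correct and follows essentially the same route as the paper: the paper's proof consists precisely of the one-line remark that the argument is analogous to Lemmas~3.5--3.7 in \cite{Bauernschubert12}, with (\ref{eq:limsup}) and the dichotomy $P_0[\liminf_{n\to\infty}S_n\in\{\pm\infty\}]=1$ replacing Lemma~3.2 there, and you have spelled out exactly this strategy. Your sketch in fact supplies more detail than the paper itself (the up-crossing identification for the equivalence, the monotonicity iteration for the second claim, and the product decomposition via independence of $(p_0,M_0)$ from $(p_x,M_x)_{x\geq1}$ for the third), all of which are the natural ingredients behind the cited lemmas.
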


  \begin{remark}
    In the case of right-recurrence note that --- due to monotonicity --- there are, in contrary to the model in \cite{Bauernschubert12}, a.s.\ infinitely many finite excursions to the right since the underlying random environment induces a recurrent random walk. Hence, $P_0\left[S_n=0 \text{ infinitely often}\right]=1$ if $(S_n)_{n\geq 0}$ is recurrent from the right.
  \label{remark:excursions}
  \end{remark}

\begin{proof}[Proof of Theorem \ref{th:1}]
  The process $(V_k)_{k\geq 0}$ as described above is a BPIRE with immigrants $(M_n)_{n\geq1}$ and offspring distribution
  $geo_{\N_0}(1-p_j)$, $j\in\N$.
  It is irreducible on the state space $\N_0$ since $0<p_0<1$ $\mathbb{P}$-a.s.\ and $\mathbb{P}[M_0=0]>0$ by Assumption \ref{as_A}. Given an environment $\omega\in\Omega$, the expected number of offspring produced by a single particle in the $(j-1)$-st generation and its variance are
    \[
      \mu_j(\omega):=E_{0,\omega}[\xi_1^{(j)}]=\frac{p_j(\omega)}{1-p_j(\omega)}=\rho_j^{-1}(\omega)
    \]
  and
    \[
      \Var_{0,\omega}(\xi_1^{(j)})=\frac{p_j(\omega)}{(1-p_j(\omega))^2}, \text{ respectively}.
    \]

  Hence, since \ref{as_A} is assumed for Theorem \ref{th:1}, $(V_k)_{k\geq 0}$ is a critical BPIRE according to Definition \ref{def:BPIRE}.
  Furthermore, $\mathbb{P}[\mu_1=1]<1$ holds by Assumption \ref{as_A}. Supposing that $\mathbb{E}[|\log\mu_1|^{\delta}]<\infty$ for every $0<\delta<6$ also includes in particular that $\mathbb{E}[(\log p_1)^2]<\infty$.
  To see this, note that
    \[
      \mathbb{E}\left[\Big(\log \frac{p_1}{1-p_1}\Big)^2\right]\geq \mathbb{E}\left[\Big(\log \frac{p_1}{1-p_1}\Big)^2\mathbbmss{1}_{p_1<\kappa}\right]
    \]
  for any $\kappa>0$. Now, for $\kappa$ small enough, a constant $c>0$ can be found such that
    \[
     \Big(\log \frac{p_1}{1-p_1}\Big)^2=(\log p_1 - \log (1-p_1))^2\geq c\cdot(\log p_1)^2
    \]
  holds on $\{0<p_1<\kappa\}$. 

  Therefore, $\mathbb{E}[\big(\log_+ (\Var_{0,\omega}(\xi_1^{(1)})\cdot \mu_1^{-2})\big)^2]=\mathbb{E}[(\log p_1)^2]<\infty$ is fulfilled and finally, the assumptions of Theorems \ref{th:recurrence_critBPIRE} and \ref{th:transience_critBPIRE} are satisfied.

  Thus, if $\mathbb{E}[(\log_+ M_0)^{2+\epsilon}]<\infty$ holds for some $\epsilon>0$, then the BPIRE $(V_k)_{k\geq 0}$ is recurrent by Theorem \ref{th:recurrence_critBPIRE}. Lemma \ref{lem:recurrence} gives that $P_0\left[S_n=0 \text{ infinitely often}\right]=1$ since the underlying RWRE is recurrent and the first statement of Theorem \ref{th:1} follows.

  Let $\liminf_{t\to\infty}(t^{\lambda}\cdot \mathbb{P}[\log{M_1}>t])> 0$ for some $0<\lambda<2$. Then $(V_k)_{k\geq 0}$ is transient by Theorem \ref{th:transience_critBPIRE}. We get thus by Lemma \ref{lem:recurrence} that $P_0\left[\lim_{n\to\infty}S_n=+\infty\right]>0$.
  Now, following the same strategy as in the proof of Theorem 1.1.(iii) in \cite{Bauernschubert12} yields
  $P_0[\lim_{n\to\infty}S_n=+\infty]=1$. Hence, the proof is complete.
\end{proof}

\begin{example}
 Suppose that the assumptions of Theorem \ref{th:1} are fulfilled and let $\lambda >0$. Let $M_0$ satisfy
  \begin{align*}
   \mathbb{P}[M_0\geq k]&=\frac{1}{(1+\log k)^{\lambda}}\quad \text{ for } k\geq2,\; k\in\N, \\
   \mathbb{P}[M_0=1]&=0,\\
   \mathbb{P}[M_0=0]&=1-\frac{1}{(1+\log 2)^{\lambda}}.
  \end{align*}
 Theorem \ref{th:1} makes no statement on the case $\lambda=2$, but for $\lambda\neq2$ we obtain the following results.\\
 If $\lambda<2$, then $\lim_{n\to\infty}S_n= +\infty$ $P_0$-a.s.\ due to $\lim_{t\to\infty} t^{\lambda}\mathbb{P}[\log M_0\geq t]=1>0$.
 If $\lambda>2$, then $S_n=0$ infinitely often $P_0$-a.s.\ since we can choose $\epsilon>0$ such that $2+\epsilon <\lambda$ and get $\mathbb{E}[(\log_+ M_0)^{2+\epsilon}]<\infty$.
\end{example}

\section*{Acknowledgements}
This work was funded by ERC Starting Grant 208417-NCIRW.
Many thanks go to my advisor Prof.\ Martin P.\ W.\ Zerner for helpful suggestions and his support. I thank Elmar Teufl
for inspiring discussions.

\bibliographystyle{amsplain}\vspace*{-8mm} % oder beliebiger anderer style
\bibliography{bibfile}

\providecommand{\bysame}{\leavevmode\hbox to3em{\hrulefill}\thinspace}
\providecommand{\MR}{\relax\ifhmode\unskip\space\fi MR }
% \MRhref is called by the amsart/book/proc definition of \MR.
\providecommand{\MRhref}[2]{%
  \href{http://www.ams.org/mathscinet-getitem?mr=#1}{#2}
}
\providecommand{\href}[2]{#2}
\begin{thebibliography}{10}

\bibitem{Babillot97}
Martine Babillot, Philippe Bougerol, and Laure Elie, \emph{The random
  difference equation {$X_n=A_nX_{n-1}+B_n$} in the critical case}, Ann.
  Probab. \textbf{25} (1997), no.~1, 478--493. \MR{1428518 (98c:60075)}

\bibitem{Basdevant08b}
Anne-Laure Basdevant and Arvind Singh, \emph{On the speed of a cookie random
  walk}, Probab. Theory Related Fields \textbf{141} (2008), no.~3-4, 625--645.
  \MR{2391167 (2008m:60197)}

\bibitem{Basdevant08a}
\bysame, \emph{Rate of growth of a transient cookie random walk}, Electron. J.
  Probab. \textbf{13} (2008), no. 26, 811--851. \MR{2399297 (2009c:60262)}

\bibitem{Bauernschubert12}
Elisabeth Bauernschubert, \emph{Perturbing transient random walk in a random
  environment with cookies of maximal strength}, Ann. Inst. Henri Poincar\'e
  Probab. Stat. (to appear).

\bibitem{Baum65}
Leonard~E. Baum and Melvin Katz, \emph{Convergence rates in the law of large
  numbers}, Trans. Amer. Math. Soc. \textbf{120} (1965), 108--123. \MR{0198524
  (33 \#6679)}

\bibitem{Benjamini}
Itai Benjamini and David~B. Wilson, \emph{Excited random walk}, Electron. Comm.
  Probab. \textbf{8} (2003), 86--92 (electronic). \MR{1987097 (2004b:60120)}

\bibitem{Vatutin11}
E.~Dyakonova, V.~Vatutin, and S.~Sagitov, \emph{Survival of branching processes
  in random environments}, Available at \url{http://arxiv.org/abs/1110.6139},
  2011.

\bibitem{Elie82}
Laure {\'E}lie, \emph{Comportement asymptotique du noyau potentiel sur les
  groupes de {L}ie}, Ann. Sci. \'Ecole Norm. Sup. (4) \textbf{15} (1982),
  no.~2, 257--364. \MR{683637 (85b:60005)}

\bibitem{Feller22}
William Feller, \emph{An introduction to probability theory and its
  applications. {V}ol. {II}.}, Second edition, John Wiley \& Sons Inc., New
  York, 1971. \MR{0270403 (42 \#5292)}

\bibitem{Goldie00}
Charles~M. Goldie and Ross~A. Maller, \emph{Stability of perpetuities}, Ann.
  Probab. \textbf{28} (2000), no.~3, 1195--1218. \MR{1797309 (2003b:60045)}

\bibitem{Kallenberg02}
Olav Kallenberg, \emph{Foundations of modern probability}, second ed.,
  Probability and its Applications (New York), Springer-Verlag, New York, 2002.
  \MR{1876169 (2002m:60002)}

\bibitem{Key87}
Eric~S. Key, \emph{Limiting distributions and regeneration times for multitype
  branching processes with immigration in a random environment}, Ann. Probab.
  \textbf{15} (1987), no.~1, 344--353. \MR{877607 (88m:60217)}

\bibitem{Zerner08}
Elena Kosygina and Martin P.~W. Zerner, \emph{Positively and negatively excited
  random walks on integers, with branching processes}, Electron. J. Probab.
  \textbf{13} (2008), no. 64, 1952--1979. \MR{2453552 (2009m:60231)}

\bibitem{KosyginaZerner12}
\bysame, \emph{Excited random walks: results, methods, open problems}, Bull.
  Inst. Math. Acad. Sin. (N.S.) (to appear in a special issue in honor of
  S.R.S. Varadhan's 70th birthday), 2012.

\bibitem{Lukacs75}
Eugene Lukacs, \emph{Stochastic convergence}, second ed., Academic Press
  [Harcourt Brace Jovanovich Publishers], New York, 1975, Probability and
  Mathematical Statistics, Vol. 30. \MR{0375405 (51 \#11599)}

\bibitem{Roitershtein07}
Alexander Roitershtein, \emph{A note on multitype branching processes with
  immigration in a random environment}, Ann. Probab. \textbf{35} (2007), no.~4,
  1573--1592. \MR{2330980 (2008e:60316)}

\bibitem{Solomon75}
Fred Solomon, \emph{Random walks in a random environment}, Ann. Probability
  \textbf{3} (1975), 1--31. \MR{0362503 (50 \#14943)}

\bibitem{Vatutin93}
V.~A. Vatutin and A.~M. Zubkov, \emph{Branching processes. {II}}, J. Soviet
  Math. \textbf{67} (1993), no.~6, 3407--3485, Probability theory and
  mathematical statistics, 1. \MR{1260986 (95f:60102)}

\bibitem{Zerner02}
Martin P.~W. Zerner, \emph{Integrability of infinite weighted sums of
  heavy-tailed i.i.d.\ random variables}, Stochastic Process. Appl. \textbf{99}
  (2002), no.~1, 81--94. \MR{1894252 (2003f:60083)}

\bibitem{Zerner05}
\bysame, \emph{Multi-excited random walks on integers}, Probab. Theory Related
  Fields \textbf{133} (2005), no.~1, 98--122. \MR{2197139 (2006k:60178)}

\bibitem{Zerner06}
\bysame, \emph{Recurrence and transience of excited random walks on {$\Bbb
  Z^d$} and strips}, Electron. Comm. Probab. \textbf{11} (2006), 118--128
  (electronic). \MR{2231739 (2007g:60123)}

\end{thebibliography}
\parindent=0pt % keinen einzug bei den adressen...
\end{document}